\newlength\DX
\newlength\DY
\newtheorem*{theorem*}{Theorem}
\newtheorem*{proposition*}{Proposition}
\newcommand{\sca}[2]{\langle #1 | #2\rangle}
\newcommand{\nr}[1]{\left\Vert #1\right\Vert}
\newcommand{\abs}[1]{\left\vert #1\right\vert}
\newcommand{\Rsp}{\mathbb{R}}
\newcommand{\Wass}{\mathrm{W}}
\renewcommand{\d}{\mathrm{d}}
\newcommand{\dd}{\mathrm{d}}
\newcommand{\Prob}{\mathcal{P}}
\newcommand*{\T}{\mathcal{T}}
\newcommand{\Ent}{\mathcal{E}}
\newmdtheoremenv{theo}{Theorem}
\newmdtheoremenv{coro}{Corollary}
\newtheorem{theorem}{Theorem}[section]
\newtheorem{lemma}[theorem]{Lemma}
\newtheorem{proposition}[theorem]{Proposition}
\theoremstyle{definition}
\newtheorem{remark}{Remark}[section]
\title[Regularized Moment Measures]{Regularized Moment Measures}
\author{Alex Delalande}
\address{}
\email{delalande.alex@gmail.com}
\author{Sara Farinelli}
\address{Università di Genova, Dipartimento di Matematica, MaLGa, Genova, Italy}
\email{sara.farinelli@edu.unige.it}
\begin{document}

\begin{abstract}

In the work \emph{Dealing with moment measures via entropy and optimal transport}, Santambrogio provided an optimal transport approach to study existence of solutions to the moment measure equation, that is: given $\mu$, find $u$ such that $ (\nabla u)_{\sharp}e^{-u}\mathcal=\mu$. In particular he proves that $u$ satisfies the previous equation if and only if $e^{-u}$ is the minimizer of an entropy and a transport cost. Here we study a modified minimization problem, in which we add a strongly convex regularization depending on a positive $\alpha$ and we link its solutions to a modified moment measure equation $(\nabla u)_{\sharp}e^{-u-\frac{\alpha}{2} \|x\|^2}= \mu$. Exploiting the regularization term, we  study the stability of the minimizers. 
\end{abstract}

\keywords{Log-concave measures, Wasserstein distance, Stability inequality, Moment measures}
\subjclass[2020]{49Q22, 49K40,  39B62}
\maketitle

\section*{Introduction}
\label{sec:introduction}
%%%%%%%%%%%%%%%%%%%%%%%%%%%%%%%%%%%%%%%%%%%%%%%%%%%%%
%general
%%%%%%%%%%%%%%%%%%%%%%%%%%%%%%%%%%%%%%%%%%%%%%%%%%%%%

A probability measure $\mu$ on $\Rsp^d$ is said to be the \emph{moment measure} of the convex function $u:\Rsp^d\to \Rsp\cup \{+\infty\}$ if $\mu$ is the pushforward through the gradient of $u$ of the log-concave probability measure with density $e^{-u}$, that is
\begin{equation}\label{eq:moment_meas}
 (\nabla u)_{\sharp}e^{-u}\mathcal{L}^d=\mu, 
\end{equation}
where $u$ is called \emph{moment map}. {Despite} stemming from convex geometry, the notion of moment measure has noticeable links with {functional analysis} and optimal transport theory. {The most general result} of well posedness of the problem and of existence and uniqueness is proved in \cite{CORDEROERAUSQUIN20153834} by Cordero-Erasquin and Klartag using techniques linked to functional inequalities.  
Their approach relies on the study of the maximization problem
\begin{align}\label{eq:moment-meas_dual}
\sup_{\varphi}\, \log \int e^{-\varphi^* }\dd x - \sca{\varphi}{\mu},
\end{align}
with $\varphi^{\ast}$ being the Legendre transform of $\varphi$. If $\varphi$ is a maximizer of \eqref{eq:moment-meas_dual}, then $\varphi^\ast$ is a solution of \eqref{eq:moment_meas}.
They also observed that in order to have a meaningful problem, one has to look for convex functions $u:\Rsp^d\to \Rsp\cup\{+\infty\}$, which are \emph{essentially continuous}, that is convex functions that are continuous on the boundary of the set: $\{u<+\infty\}$, $\mathcal{H}^{d-1}$ almost everywhere:  in this case $\mu$ is a moment measure if and only if $\mu$ has barycenter  at the origin and it is not supported on a hyperplane. Moreover, the solution to the moment measure problem is unique up to translation because  of the invariance of the Lebesgue measure with respect to translations.
The same existence results are also obtained in \cite{SANTAMBROGIO2016418} by Santambrogio, via a different variational formulation involving the $2$-Wasserstein distance from optimal transport. Santambrogio's approach relies on the minimization of  the following quantity 
\begin{equation}
\label{eq:moment-measure_wasserstein}
  -W_2^2(\rho, \mu)+\frac{1}{2}M_2(\rho)+\Ent(\rho),
\end{equation}
where $W_2(\rho, \mu)$ is the $2$-Wasserstein distance between $\rho$ and $\mu$,  $M_2(\rho)=\int_{\Rsp^d}\|x\|^2\dd \rho$ is the second moment of $\rho$ and $\Ent(\rho)\coloneqq \int_{\Rsp^d}\rho\log(\rho)\dd x$ is the entropy of $\rho$ with respect to the Lebesgue measure.
The minimization problem above can be restated as 
\begin{equation}
\label{eq:moment-measure_correlation}
    \inf_{\rho \in \Prob_1(\Rsp^d)} \Ent(\rho) + \T(\rho, \mu),
\end{equation}
where $\T(\rho,\mu)$ is the maximal correlation functional between $\rho$ and $\mu$. Then $u$ is a minimizer of \eqref{eq:moment_meas} if and only if the log-concave probability density $e^{-u}$  minimizes \eqref{eq:moment-measure_wasserstein}. Uniqueness of solution relies on the fact that the functional $\rho \mapsto \Ent(\rho) + \T(\rho, \mu)$ is geodesically convex on $\Prob^{a.c}_2(\Rsp^d)$ with respect to the geometry of the $2$-Wasserstein distance on this set. An analogous variational formulation involving optimal transport is also used by Santambrogio and Khanh to tackle the $q$-moment measure problem in \cite{khanh2020q}. See also \cite{kolesnikov2017moment} for  a variation of the moment measure problem studied  by means of the optimal transport formulation.
\\

{Quoting \cite{CORDEROERAUSQUIN20153834} and the references therein, moment measures are closely related to Kähler–Einstein metrics in complex geometry and to the (logarithmic) Minkowski problem in convex geometry. As such, given a moment measure $\mu$, one may be led to the practical problem of finding the corresponding moment map $u$. The numerical resolution of equation~\eqref{eq:moment_meas} can be effectively approached via the variational formulation~\eqref{eq:moment-meas_dual}. Indeed, using for instance the Prékopa–Leindler inequality \cite{prekopa1, leindler, prekopa2}, the functional being maximized in~\eqref{eq:moment-meas_dual} can be shown to be concave, making problem \eqref{eq:moment-meas_dual} a concave maximization problem. In the special case where the measure $\mu = \mu_N$ is finitely supported over a set of $N$ points, the unknown $\varphi = \varphi_N$ in~\eqref{eq:moment-meas_dual} can be parameterized with a finite-dimensional vector in $\Rsp^N$. Therefore in this finite-dimensional setting, problem \eqref{eq:moment-meas_dual} (and thus equation~\eqref{eq:moment_meas}) can be solved numerically using standard first- or second-order convex optimization methods. In the general case where $\mu$ is not finitely supported, problem \eqref{eq:moment-meas_dual} becomes infinite-dimensional and its numerical resolution is more challenging. A natural approach is to approximate $\mu$ with a finitely supported measure $\mu_N$, and then solve the resulting finite-dimensional concave maximization problem. In doing so, one is led to question the quality of approximation of the solutions $\varphi_N$ and $\rho_N$ to problems \eqref{eq:moment-meas_dual} and \eqref{eq:moment-measure_correlation} in terms of the support size $N$. This raises the broader question of the quantitative stability of moment maps with respect to their moment measures. Returning to the variational formulation~\eqref{eq:moment-meas_dual}, one may attempt to address this stability question by leveraging stability versions of the Prékopa–Leindler inequality in order to obtain uniform concavity estimates on the maximized functional. However, existing stability results for the Prékopa–Leindler inequality generally involve large or non-explicit stability exponents \cite{boroczky2021stability, boroczky2023quantitative, figalli2024improvedstabilityversionsprekopaleindler}, which lead to weak uniform concavity estimates for \eqref{eq:moment-meas_dual} and, consequently, to impractical stability bounds for the solutions of the moment measure problem. Motivated by these considerations, we introduce in this note a variation of the moment measure problem that enables the derivation of quantitative stability estimates for its solutions.} \\

{More precisely, we introduce in this note} a strongly convex regularized version of \eqref{eq:moment-measure_correlation}.
Let $\alpha > 0$ be a regularization parameter: we consider the following minimization problem
\begin{equation}
\label{eq:reg-moment-meausre_wasserstein}
    \inf_{\rho \in \Prob_1(\Rsp^d)} \Ent(\rho) + \T(\rho, \mu) + \alpha M_2(\rho).
\end{equation}
After studying existence uniqueness of solutions by following \cite{santambrogio2015optimal} we show  that its solutions, i.e. minimizers, correspond to solutions of a modified moment measure equation, that we call \emph{Gaussian regularized moment measure problem}.
Given $\alpha>0$, we say that $\mu\in \Prob_1(\Rsp^d)$ is an \emph{$\alpha$-Gaussian regularized moment measure} associated to the $\alpha$-Gaussian regularized moment map $u$, if there exists $u:\Rsp^d\to \Rsp$ convex and essentially continuous such that 
\begin{equation}
\nabla u_{\sharp}e^{-u-\frac{\alpha}{2} \|x\|^2}\mathcal{L}^d= \mu.
\end{equation}
One has the following.    
\begin{proposition*} 
 For any $\mu \in \Prob_1(\Rsp^d)$,  problem \eqref{eq:reg-moment-meausre_wasserstein} admits a unique minimizer $\rho \in \Prob_1(\Rsp^d)$, that satisfies $\rho = \frac{1}{Z} e^{-u - \frac{\alpha}{2} \nr{\cdot}^2}$ where $u : \Rsp^d \to \Rsp \cup \{+ \infty \}$ is a convex l.s.c. function such that $\T(\rho, \mu) = \int u \dd \rho + \int u^* \dd \mu$ and $Z$ is a positive normalizing constant.
\end{proposition*}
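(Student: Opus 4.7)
The plan is to dispatch existence and uniqueness by the direct method combined with displacement convexity, and then to read off the claimed form of the minimizer through a Fenchel--Rockafellar reformulation of the primal; this mirrors Santambrogio's treatment of the unregularized case while exploiting the extra coercivity and strict convexity afforded by the $\alpha M_2$ term.

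For existence, write $F(\rho) \coloneqq \Ent(\rho) + \T(\rho, \mu) + \alpha M_2(\rho)$. The Cauchy--Schwarz estimate $\T(\rho, \mu) \ge -\sqrt{M_2(\rho) M_2(\mu)}$ combined with Young's inequality yields $F(\rho) \ge \Ent(\rho) + \frac{\alpha}{2}M_2(\rho) - C(\alpha,\mu)$, and the right-hand side is itself lower-bounded by the minimum of $\rho \mapsto \Ent(\rho) + \frac{\alpha}{2}M_2(\rho)$ attained at the centered Gaussian of variance $1/\alpha$. Hence $F$ is bounded below and every minimizing sequence $(\rho_n)$ has uniformly bounded entropy and second moments, so it is narrowly precompact. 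The lower semicontinuities of $\Ent$, of $M_2$ (Fatou), and of $\T$ (as a supremum of linear functionals against couplings with bounded quadratic cost, hence l.s.c.\ on sets of uniformly bounded second moment) then transfer minimality to the narrow limit $\rho_\ast$.

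For uniqueness I would invoke displacement convexity in the $2$-Wasserstein space: $\Ent$ is geodesically convex by McCann's theorem, the second moment $M_2$ is geodesically $2$-convex, and $\T(\cdot, \mu)$ is geodesically convex by the argument of \cite{SANTAMBROGIO2016418} already recalled in the introduction. The regularizer $\alpha M_2$ therefore imparts strict geodesic convexity to $F$ with a modulus proportional to $\alpha > 0$, ruling out two distinct minimizers inside the absolutely continuous subset of $\Prob_2(\Rsp^d)$.

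For the form of the minimizer I would proceed via Fenchel--Rockafellar duality. Substituting Kantorovich's dual $\T(\rho, \mu) = \inf_{u} \bigl[\int u\, \dd\rho + \int u^*\, \dd\mu\bigr]$ (infimum over convex l.s.c.\ $u$) into $F$ and exchanging the two infima, the inner minimization over $\rho$ at fixed $u$ is the classical Gibbs problem whose unique solution is $\rho_u = Z(u)^{-1} \exp(-u - \frac{\alpha}{2}\nr{\cdot}^2)$. The outer problem then becomes the concave maximization of $u \mapsto \log Z(u) - \int u^*\, \dd\mu$; any optimizer $u_\ast$ simultaneously yields the primal minimizer $\rho = \rho_{u_\ast}$ and, by the saddle-point identity, realizes the Kantorovich infimum, giving $\T(\rho,\mu) = \int u_\ast\, \dd\rho + \int u_\ast^*\, \dd\mu$; passing to $u_\ast^{**}$ if necessary preserves both identities while ensuring convexity and lower semicontinuity of $u$. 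The main obstacle will be the rigorous justification of the exchange of infima and the existence of $u_\ast$ in the dual: this is precisely where the Gaussian weight $e^{-\frac{\alpha}{2}\nr{\cdot}^2}$ inside $Z(u)$ pays off, providing the coercivity and uniform concavity of the dual functional that are absent at $\alpha = 0$.
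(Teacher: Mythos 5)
Your architecture for existence and uniqueness matches the paper's (direct method for a minimizing sequence, then strong geodesic convexity of $F_\mu$ on $\Prob_2^{a.c.}(\Rsp^d)$ for uniqueness), but two steps need repair. First, the coercivity bound $\T(\rho,\mu)\ge-\sqrt{M_2(\rho)M_2(\mu)}$ is vacuous when $M_2(\mu)=+\infty$, and the proposition is stated for arbitrary $\mu\in\Prob_1(\Rsp^d)$. The paper instead uses the product coupling to get $\T(\rho,\mu)\ge\sca{b(\rho)}{b(\mu)}\ge-M_1(\rho)M_1(\mu)$, which combined with the Carleman-type bound $\Ent(\rho)\ge-C-M_1(\rho)^{1/2}$ and Jensen's inequality $M_2(\rho)\ge M_1(\rho)^2$ yields coercivity in $M_1(\rho)$, hence tightness, under the stated hypothesis. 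Second, lower semicontinuity of $\T(\cdot,\mu)$ under narrow convergence is not a ``supremum of linear functionals'' statement: the admissible couplings depend on $\rho$ and the cost $\sca{x}{y}$ is unbounded below (the dual formulation, an infimum of affine functionals, would rather suggest \emph{upper} semicontinuity). The paper handles this by first normalizing the minimizing sequence to have fixed barycenter (via a translation lemma) and then invoking Proposition 3.1 of \cite{SANTAMBROGIO2016418}; you would need to either cite that result or prove the semicontinuity on your sequence with uniformly bounded second moments.

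For the characterization your route is genuinely different from the paper's, but as written it hinges on attainment in the dual, which you explicitly defer and which is the hard part (it is essentially the Cordero-Erausquin--Klartag existence theorem, here with a Gaussian weight). Note first that no justification is needed for the exchange of the two infima: it is an $\inf$--$\inf$, not a minimax. The real gap is the existence of a maximizer $u_\ast$ of $u\mapsto\log Z(u)-\int u^*\,\dd\mu$, without which you cannot set $\rho=\rho_{u_\ast}$. You can close this without any dual attainment by running the argument backwards: let $\rho_\ast$ be the primal minimizer you already constructed, let $u_0$ be an optimal Kantorovich potential for $\T(\rho_\ast,\mu)$ (which exists by standard duality), and set $G(u)\coloneqq\int u^*\,\dd\mu-\log Z(u)$ with $Z(u)=\int e^{-u-\frac{\alpha}{2}\nr{\cdot}^2}\,\dd x$. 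The chain $F(\rho_\ast)=\Ent(\rho_\ast)+\int u_0\,\dd\rho_\ast+\int u_0^*\,\dd\mu+\frac{\alpha}{2}M_2(\rho_\ast)\ge G(u_0)\ge\inf_u G(u)=\inf_\rho F(\rho)=F(\rho_\ast)$ forces every inequality to be an equality, so $\rho_\ast$ solves the inner Gibbs problem for $u_0$, i.e. $\rho_\ast=Z^{-1}e^{-u_0-\frac{\alpha}{2}\nr{\cdot}^2}$, and the Kantorovich identity $\T(\rho_\ast,\mu)=\int u_0\,\dd\rho_\ast+\int u_0^*\,\dd\mu$ holds by the choice of $u_0$. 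The paper reaches the same conclusion by a first-variation argument at the minimizer (following Theorem 4.1 of \cite{SANTAMBROGIO2016418}), showing that $\rho_\ast$ concentrates where $1+\log\rho_\ast+u_0+\frac{\alpha}{2}\nr{\cdot}^2$ is minimal; your Gibbs-duality identification is arguably cleaner, but only once reoriented as above.
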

Notice that in the regularized case, one does not need any condition on the barycenter and on the support of $\mu$ to have the existence of a solution. For this reason, this notion of $\alpha$ Gaussian regularized moment measures, which up to our knowledge 
appears here for the first time, gives an equation for which existence of solution is guaranteed 
in more generality. The Gamma-convergence of the $\alpha$-Gaussian 
moment measure functional to the moment measure functional for $\alpha$ going to $0$, 
that we do not prove here for brevity, follows by standard arguments. Moreover the problem is no more invariant under space translation of $u$.
\\

What motivates our study is the investigation of stability of the moment map with respect to variation of the starting measure. {Up to our knowledge, no results are present in literature in this direction.} The introduction of the regularization term, that makes the problem strongly geodesically convex,
 helps to understand the dependence of the minimizers of \eqref{eq:reg-moment-meausre_wasserstein} with respect to $\mu$. We show the following stability inequality for the minimizers of~\eqref{eq:reg-moment-meausre_wasserstein}: 
%A missing step remains to prove an inequality linking those minimizers to moment maps. 
\begin{theorem*}
Let $M, B > 0$. Then for any $\mu, \nu \in \Prob_2(\Rsp^d)$ with $M_2(\mu), \, M_2(\nu)\leq M$ and $b(\mu), \,b(\nu)\in B(0,B)$, let $\rho_\mu$ and $\rho_\nu$ be the minimizers of \eqref{eq:reg-moment-meausre_wasserstein}, it holds
\begin{equation}
\Wass_2(\rho_\mu, \rho_\nu) \leq C(d, \alpha, M,B) \Wass_2(\mu, \nu)^{1/2}.
\end{equation}
\end{theorem*}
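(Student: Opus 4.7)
The plan is to exploit the $2\alpha$-strong geodesic convexity of the regularized functional
\[ F_\mu(\rho) := \Ent(\rho) + \T(\rho, \mu) + \alpha M_2(\rho). \]
I would first record that $F_\mu$ is $2\alpha$-geodesically convex along $\Wass_2$-geodesics: the geodesic convexity of $\Ent + \T(\cdot, \mu)$ is the property already used by Santambrogio in \cite{SANTAMBROGIO2016418} (and behind the uniqueness stated in the preceding proposition), while $\alpha M_2$ is $2\alpha$-strongly geodesically convex thanks to the exact identity
\[ M_2(\rho_t) = (1-t) M_2(\rho_0) + t M_2(\rho_1) - t(1-t) \Wass_2^2(\rho_0, \rho_1) \]
valid along any constant-speed $\Wass_2$-geodesic $(\rho_t)_{t \in [0,1]}$. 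The standard consequence of $2\alpha$-strong convexity is that the minimizer $\rho_\mu$ satisfies $F_\mu(\rho) \ge F_\mu(\rho_\mu) + \alpha \Wass_2^2(\rho, \rho_\mu)$ for every $\rho \in \Prob_1(\Rsp^d)$.

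Applying this inequality with $\rho = \rho_\nu$, then the symmetric version obtained by swapping $\mu$ and $\nu$, and summing, the entropy and $M_2$ contributions cancel to give
\[ 2\alpha \Wass_2^2(\rho_\mu, \rho_\nu) \le \bigl[\T(\rho_\nu, \mu) - \T(\rho_\nu, \nu)\bigr] + \bigl[\T(\rho_\mu, \nu) - \T(\rho_\mu, \mu)\bigr]. \]
Using the identity $\T(\rho, \pi) = \tfrac{1}{2}(M_2(\rho) + M_2(\pi) - \Wass_2^2(\rho, \pi))$, the $M_2(\mu) - M_2(\nu)$ pieces cancel between the two brackets, leaving a right-hand side controlled by differences of the form $|\Wass_2^2(\rho, \mu) - \Wass_2^2(\rho, \nu)|$ for $\rho \in \{\rho_\mu, \rho_\nu\}$. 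Each such difference is bounded by the triangle inequality as $(\Wass_2(\rho, \mu) + \Wass_2(\rho, \nu)) \Wass_2(\mu, \nu) \le 2(\sqrt{M_2(\rho)} + \sqrt{M}) \Wass_2(\mu, \nu)$, so that everything reduces to a uniform a priori bound on $M_2(\rho_\mu)$ and $M_2(\rho_\nu)$.

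The main technical step, which I expect to be the bottleneck, is thus to obtain $M_2(\rho_\mu) \le C(d, \alpha, M, B)$. I would derive this by combining the upper bound $F_\mu(\rho_\mu) \le F_\mu(\gamma)$ at a fixed standard Gaussian reference $\gamma$, for which $F_\mu(\gamma)$ is explicit in $d, \alpha, M$ via $\T(\gamma, \mu) \le \tfrac{1}{2}(M_2(\gamma) + M)$, with a lower bound on $F_\mu(\rho_\mu)$ obtained from the Gibbs-type inequality $\Ent(\rho) \ge -\tfrac{1}{2\sigma^2} M_2(\rho) - C(d, \sigma)$ (taken with $\sigma^2 = 2/\alpha$) together with the Cauchy--Schwarz lower bound $\T(\rho_\mu, \mu) \ge - \sqrt{M_2(\rho_\mu) M}$. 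Young's inequality absorbs the cross term $\sqrt{M_2(\rho_\mu) M}$ into $\tfrac{\alpha}{2} M_2(\rho_\mu)$, yielding $F_\mu(\rho_\mu) \ge \tfrac{\alpha}{2} M_2(\rho_\mu) - C(d, \alpha, M)$, and hence the desired moment bound. Note that the assumption $b(\mu) \in B(0,B)$ is essentially redundant with $M_2(\mu) \le M$ since $|b(\mu)|^2 \le M_2(\mu)$, and so enters only through refinements of the constants. Plugging this a priori bound into the previous display yields $\Wass_2^2(\rho_\mu, \rho_\nu) \le C(d, \alpha, M, B) \Wass_2(\mu, \nu)$, which is precisely the advertised Hölder-$1/2$ stability estimate.
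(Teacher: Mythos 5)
Your proposal is correct and follows essentially the same route as the paper: strong geodesic convexity at the two minimizers reduces the problem to the cross-differences of the maximal-correlation terms, these are rewritten via $\T(\rho,\pi)=\tfrac12(M_2(\rho)+M_2(\pi)-\Wass_2^2(\rho,\pi))$ and controlled by the triangle inequality, and everything hinges on a uniform bound for $M_2(\rho_\mu)$ obtained by comparison with a Gaussian. The only (immaterial) difference is in that last step, where you absorb the entropy via the Gibbs inequality against a Gaussian of variance $2/\alpha$ plus Young's inequality to get a linear bound directly, whereas the paper uses the maximum-entropy Gaussian at fixed second moment and ends up with an inequality of the form $M_2(\rho_\mu)-\tfrac{d}{\alpha}\log M_2(\rho_\mu)\leq C$; both yield the required a priori bound.
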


{As a consequence, in the practical setting where the moment measure $\mu$ is absolutely continuous and approximated by a discrete measure $\mu_N$ supported on $N$ points -- typically satisfying $\Wass_2(\mu, \mu_N) \sim N^{-1/d}$ -- the above theorem guarantees that the solution $\rho_N$ of~\eqref{eq:reg-moment-meausre_wasserstein} associated with $\mu_N$ converges to the solution $\rho$ of~\eqref{eq:reg-moment-meausre_wasserstein} associated with $\mu$ at a comparable rate of $N^{-1/2d}$ in Wasserstein distance.}

We mention for completeness that in a different direction, in \cite{Klartag1} it is proved a continuity result of the moment measure with respect to convergence of the moment map,  while in \cite{fathi2019stein} and \cite{Fathi-Mik} it is proved a stability result of the moment measure with respect to the variations of the moment map in the case where one of the two measures is a Gaussian. 
Finally we mention that
the choice of the regularization $M_2(\rho)$ has been done for simplicity: we believe that by choosing a strongly geodesically convex regularization term regular enough, the conclusions could be analogous: see Remark \ref{remark:different-potential}.

\subsection*{Structure of the paper}
In Section \ref{sec:existence} we study the minimization problem \eqref{eq:reg-moment-meausre_wasserstein}: we show existence, uniqueness of minimizers and characterize them.
In Section \ref{sec:stability} we show a stability result for the minimizers of \eqref{eq:reg-moment-meausre_wasserstein}.

\section{Preliminaries}
\label{sec:preliminaries}
\subsection*{Notations}  For $x,y\in \Rsp^d$, $\sca{x}{y}$ is the usual scalar product, if $f$ is a function and $\mu$ a measure, $\sca{f}{\mu}$ is the  pairing $\int_{\Rsp^d}f\,\dd \mu$. For $v\in \Rsp^d$, $\tau_v:x\mapsto x+v$ is the translation of a vector $v$. $B(0,B)$ is the open ball of $\Rsp^d$ of center $0$ and radius $B$.
\\

$\Prob(\Rsp^d)$ is the space of probability measures on $\Rsp^d$. For any $p \geq 1$ and $\mu \in \Prob(\Rsp^d)$, $M_p(\mu)$ denotes the $p$-th moment of $\mu$, that is $\int \nr{x}^p \dd \mu(x)$, and $\Prob_p(\Rsp^d)$ denotes the subset of probability measures over $\Rsp^d$ with finite $p$-th moment. $\Prob^{a.c}_2(\Rsp^d)$ is the set of probability measures in $\Prob_2(\Rsp^d)$ which are absolutely continuous with respect to the Lebesgue measure. We say that a sequence of probability measures $\mu_n$ narrowly converges to $\mu$ in $\mathcal{P}(\Rsp^d)$, $\mu_n\rightharpoonup\mu$ if $\int f\mu_n\rightarrow\int f \mu$ for any $f\in \mathcal{C}_b(\Rsp^d)$. The barycenter of $\mu\in \Prob(\Rsp^d)$ is $b(\mu)\coloneqq \int_{\Rsp^d}x\, \dd \mu\in \Rsp^d$. We say that a measure is centered if it has barycenter in the origin. For a Borel map $T$ and a measure $\mu$, $T_{\sharp}\mu$ is the pushforward of $\mu$ via $T$. $W_p(\mu,\nu)$ is the $p$-Wasserstein distance.

The \emph{maximum correlation functional} $\T$ is defined for $\rho$ and $\mu$ in  
$ \Prob_1(\Rsp^d)$ as 
$\T(\rho, \mu)$:
\begin{equation*}
\T(\rho, \mu) \coloneqq \max_{\gamma \in \Gamma(\rho, \mu)} \int \sca{x}{y} \dd \gamma(x,y) = \min_{u : \Rsp^d \to \Rsp \cup \{+\infty\} u \text{ convex}} \int u \dd \rho + \int u^* \dd \mu,
\end{equation*}
where $u^{\ast}:\Rsp\to $  is the Legendre transform of $u$ and   $\Gamma(\rho, \mu)$ is the set of probabilities $\gamma\in \mathcal{P}(\Rsp^d\times\Rsp^d )$ such that $(P_1)_{\sharp}\gamma= \rho$ and $(P_2)_{\sharp}\gamma= \mu$ where $P_1$ is the projection on the first component and $P_2$ on the second. The fact that the maximum and minimum are well defined and equality between the maximum and the minimum  follow from classical optimal transport duality arguments (see e.g. \cite{santambrogio2015optimal}). It is worth underlining, for our purposes, that if $u$ is a minimizer, then $\nabla u$ is the optimal transport map between $\mu$ and $\nu$. 
\\

The \emph{entropy functional}, $\Ent$, is defined for $\rho \in \Prob(\Rsp^d)$, as 
\begin{equation*}
\Ent(\rho) \coloneqq 
\begin{cases}
\int \rho(x) \log(\rho(x)) \dd x & \mbox{if } \rho \ll \mathcal{L}^d, \\
+\infty & \text{else}.
\end{cases}
\end{equation*}

We restrict ourselves to the space $(\Prob_2(\Rsp^d),W_2)$. It is known that the entropy functional defined above is convex along $2$-Wasserstein geodesics. Moreover it is lower semicontinuous with respect to sequential narrow convergence of probability measures having equibounded second moments, that is: if $\mu_n\rightharpoonup\mu$ with $M_2(\mu_n)\leq C$ and $\Ent(\mu_n)\leq C$, then $\Ent(\mu)\leq \liminf_{n\to +\infty}\Ent(\mu_n)$, see e.g. \cite[Proposition 2.1]{SANTAMBROGIO2016418}. If $\mu_0$ and $\mu_1$ are in $\Prob^{a.c}_2(\Rsp^d)$, and $t\mapsto \mu_t$ is the $2$-Wasserstein geodesic connecting them, then, $t\mapsto\T(\mu_t,\mu)$ with $\mu\in \Prob_2(\Rsp^d)$, is convex as proven in \cite[Proposition 3.3]{SANTAMBROGIO2016418}. 
%nostro problema 
\subsection*{Regularization of the optimal transport formulation} Let $\alpha > 0$. We introduce the following regularized version of \eqref{eq:moment-measure_correlation}:
\begin{equation}
\label{eq:reg-moment-meausre_wasserstein2}
    \inf_{\rho \in \Prob_1(\Rsp^d)} \Ent(\rho) + \T(\rho, \mu) + \alpha M_2(\rho).
\end{equation}
We denote 
\begin{align*}
F_\mu : \rho \mapsto \Ent(\rho) + \T(\rho, \mu) + \alpha M_2(\rho).
\end{align*}

Since the second moment functional is strongly geodesically convex over $\Prob_2(\Rsp^d)$, and as stated above, the entropy and the maximal correlation functional are convex along $2$-Wasserstein geodesics, we have that $F_{\mu}$
is strongly geodesically convex over $\Prob^{a.c}_2(\Rsp^d)$: 
we have for any $\rho_0, \rho_1 \in \Prob^{a.c}_2(\Rsp^d)$ connected via the geodesic $(\rho_t)_{0 \leq t \leq 1} \in \Prob^{a.c}_2(\Rsp^d)$ that for any $t \in [0,1]$,
\begin{equation}
\label{eq:strong-geodesic-convexity-F-mu}
    F_\mu(\rho_t) \leq (1-t)F_\mu(\rho_0) + t F_\mu(\rho_1) - \alpha \frac{t (1-t)}{2} \Wass_2^2(\rho_0, \rho_1).
\end{equation}

\subsection*{Gaussian regularized moment measures}
As for the classical case mentioned above, solving \eqref{eq:reg-moment-meausre_wasserstein} is equivalent to solve a \emph{modified moment measure problem}.

Analogously to the classical moment measure problem, given $\alpha>0$, we say that $\mu\in \Prob_1(\Rsp^d)$ is an $\alpha$-Gaussian regularized moment measure associated to the $\alpha$-Gaussian regularized moment map $u$ if there exists $u:\Rsp^d\to \Rsp$ convex and essentially continuous such that 
\begin{equation}\label{eq:alpha-mm}
\nabla u_{\sharp}e^{-u-\frac{\alpha}{2} \|x\|^2}= \mu.
\end{equation}
$u:\Rsp^d\to \Rsp\cup{+\infty}$ convex is essentially continuous if it is  continuous on the boundary of $u<+\infty$, $\mathcal{H}^{d-1}$ a.e. (see     \cite{CORDEROERAUSQUIN20153834}).

Again from classical optimal transport theory, if $u$ convex  satisfies \eqref{eq:alpha-mm} then $\nabla u$ is the optimal transport map for the $L^2$ optimal transport problem between $e^{-u-\frac{\alpha}{2} \|x\|^2}$ and $\mu$.{
It will turn out that any measure $\mu\in \Prob_1(\Rsp^d)$ is an $\alpha$-Gaussian regularized moment measure. }

\section{Existence and characterization of solutions}
\label{sec:existence}
We now state an existence result for problem \eqref{eq:reg-moment-meausre_wasserstein}.

\begin{proposition} \label{prop:existence}
 For any $\mu \in \Prob_1(\Rsp^d)$,  problem \eqref{eq:reg-moment-meausre_wasserstein} admits a unique minimizer $\rho \in \Prob_1(\Rsp^d)$, that satisfies $\rho = \frac{1}{Z} e^{-u - \frac{\alpha}{2} \nr{\cdot}^2}$ where $u : \Rsp^d \to \Rsp \cup \{+ \infty \}$ is a convex l.s.c. function such that $\T(\rho, \mu) = \int u \dd \rho + \int u^* \dd \mu$ and $Z$ is a positive constant.
\end{proposition}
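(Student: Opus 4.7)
The plan is three-fold: existence of a minimizer by the direct method using the coercivity provided by the $\alpha M_2$ regularization, uniqueness from the strong geodesic convexity~\eqref{eq:strong-geodesic-convexity-F-mu}, and explicit identification of the minimizer by a Gibbs-style comparison against a Gaussian candidate built from an optimal dual potential.

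For existence, I would first check that $F_\mu$ is coercive in $M_2$: the Gaussian entropy lower bound $\Ent(\rho) \geq -\frac{d}{2}\log(2\pi e\, M_2(\rho)/d)$ together with a Young-type lower bound $\T(\rho,\mu) \geq -\frac{1}{2}M_2(\rho) - C_\mu$ obtained from the dual formulation (after a preliminary truncation of $\mu$ if $M_2(\mu) = +\infty$, since we only assume $\mu \in \Prob_1(\Rsp^d)$), combined with the term $\alpha M_2(\rho)$, force $F_\mu(\rho) \to +\infty$ whenever $M_2(\rho) \to +\infty$. A minimizing sequence is therefore tight with uniformly bounded entropy, so it extracts a narrow limit $\rho$. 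Narrow lower semicontinuity of each term---Fatou for $M_2$, \cite[Proposition~2.1]{SANTAMBROGIO2016418} for $\Ent$ along sequences with equibounded second moments, and the dual representation of $\T(\cdot,\mu)$ as an infimum of narrowly continuous linear functionals in $\rho$---then yields $F_\mu(\rho) \leq \liminf_n F_\mu(\rho_n)$ and thus minimality of $\rho$.

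For uniqueness and the explicit form, note that any minimizer has finite entropy, hence is absolutely continuous; two distinct minimizers would admit a $\Wass_2$-geodesic in $\Prob^{a.c}_2(\Rsp^d)$ whose midpoint would strictly decrease $F_\mu$ by \eqref{eq:strong-geodesic-convexity-F-mu}, a contradiction. To identify $\rho$, let $u : \Rsp^d \to \Rsp \cup \{+\infty\}$ be a convex l.s.c.\ potential realizing the Kantorovich duality $\T(\rho,\mu) = \int u\,\dd\rho + \int u^*\,\dd\mu$ (which exists by classical optimal transport duality as recalled in the preliminaries), and set
\[
\rho^\star \coloneqq \frac{1}{Z}\, e^{-u - \frac{\alpha}{2}\nr{\cdot}^2}, \qquad Z \coloneqq \int e^{-u(x) - \frac{\alpha}{2}\nr{x}^2}\,\dd x,
\]
which is a well-defined probability measure because the proper convex function $u$ is bounded below by an affine function. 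The Gibbs variational principle (equivalently, the non-negativity of $\KL(\nu\,\|\,\rho^\star)$ for any competing probability measure $\nu$) identifies $\rho^\star$ as the unique minimizer over probability measures of the linearised surrogate of $F_\mu$ obtained by replacing $\T(\nu,\mu)$ with its dual upper bound $\int u\,\dd\nu + \int u^*\,\dd\mu$. Combining this minimality with the dual inequality $\T(\rho^\star,\mu) \leq \int u\,\dd\rho^\star + \int u^*\,\dd\mu$ yields $F_\mu(\rho^\star) \leq F_\mu(\rho)$, and the uniqueness just established forces $\rho = \rho^\star$.

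The step I anticipate as the main obstacle is the treatment of $\T(\cdot,\mu)$ under the sole assumption $\mu \in \Prob_1(\Rsp^d)$: both the coercivity bound on $F_\mu$ and the narrow lower semicontinuity of $\T(\cdot,\mu)$ are transparent when $\mu \in \Prob_2(\Rsp^d)$ (via Cauchy--Schwarz and explicit Gaussian test potentials) but demand an additional truncation or approximation argument in the stated generality. Once those two points are secured, the Gibbs comparison in the final step is essentially formal and automatically delivers the stated exponential form of $\rho$.
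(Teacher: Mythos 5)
Your overall architecture (direct method, strong geodesic convexity for uniqueness, then an explicit comparison to identify the minimizer) is sound, but the existence step contains a genuine logical gap at exactly the point you flag as the main obstacle. You propose to get narrow lower semicontinuity of $\T(\cdot,\mu)$ from "the dual representation of $\T(\cdot,\mu)$ as an infimum of narrowly continuous linear functionals in $\rho$". This goes the wrong way: an infimum of continuous (or l.s.c.) functionals is \emph{upper} semicontinuous, not lower semicontinuous, so the dual (min) formulation cannot deliver the inequality $\liminf_n \T(\rho_n,\mu)\geq\T(\rho,\mu)$. Moreover the proposed repair (truncating $\mu$) does not address the real difficulty, which sits on the $\rho_n$ side: with the primal (max) formulation the integrand $\sca{x}{y}$ is unbounded below, so passing to the limit in $\int\sca{x}{y}\,\dd\gamma_n$ requires some normalization of the sequence. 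The paper's resolution is structural: it first reduces to $b(\mu)=0$ via the explicit translation identities of Lemma~\ref{lemma:translations} and Proposition~\ref{prop:translation}, then normalizes the minimizing sequence so that $b(\rho_n)=0$ (translating $\rho$ can only decrease $F_\mu$ in that case), and only then invokes \cite[Proposition 3.1]{SANTAMBROGIO2016418}, which gives lower semicontinuity of $\T(\cdot,\mu)$ along centered narrowly convergent sequences. Some such barycenter normalization (or an equivalent uniform integrability argument) must appear in your proof. A second, smaller issue is quantitative: the bound $\T(\rho,\mu)\geq-\tfrac12 M_2(\rho)-C_\mu$ only beats the regularization when $\alpha$ is large enough; either use a weighted Young inequality, or do as the paper does and take the product coupling to get $\T(\rho,\mu)\geq\sca{b(\rho)}{b(\mu)}\geq-M_1(\rho)M_1(\mu)$, which requires no truncation of $\mu\in\Prob_1(\Rsp^d)$ and, combined with $\Ent(\rho)\geq-C-M_1(\rho)^{1/2}$ and $M_2(\rho)\geq M_1(\rho)^2$, yields coercivity in $M_1$ for every $\alpha>0$.

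Your identification step, by contrast, is correct and genuinely different from the paper's. The paper argues via a first variation as in \cite[Theorem 4.1]{SANTAMBROGIO2016418}: the minimizer concentrates where $1+\log\bar\rho(x)+u(x)+\tfrac{\alpha}{2}\nr{x}^2$ attains its essential infimum, which forces the exponential form pointwise. Your Gibbs/Kullback--Leibler comparison replaces this with a global inequality: writing $G(\nu)=\Ent(\nu)+\int u\,\dd\nu+\int u^*\dd\mu+\tfrac{\alpha}{2}M_2(\nu)=\KL(\nu\,\|\,\rho^\star)-\log Z+\int u^*\dd\mu$, one has $F_\mu\leq G$ everywhere with equality at $\rho$, hence $F_\mu(\rho^\star)\leq G(\rho^\star)=G(\rho)-\KL(\rho\,\|\,\rho^\star)=F_\mu(\rho)-\KL(\rho\,\|\,\rho^\star)$; since $\rho$ is a minimizer this forces $\KL(\rho\,\|\,\rho^\star)=0$ and $\rho=\rho^\star$ directly, without even invoking geodesic convexity at this stage. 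This is cleaner than the paper's argument and avoids the measure-theoretic care needed to make the pointwise Euler--Lagrange statement rigorous; you should just record the routine checks that $0<Z<\infty$ (from $u$ proper and bounded below by an affine function) and that $\int u\,\dd\rho^\star$, $M_2(\rho^\star)$ are finite so that the $\KL$ decomposition of $G$ is legitimate. Finally, for the uniqueness step note explicitly that any minimizer lies in $\Prob_2^{a.c}(\Rsp^d)$ -- absolute continuity from finite entropy, and finite second moment because $F_\mu=+\infty$ otherwise (the entropy and correlation terms are bounded below on $\Prob_1$) -- since \eqref{eq:strong-geodesic-convexity-F-mu} is only available on $\Prob_2^{a.c}(\Rsp^d)$.
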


Notice that saying that $\rho = \frac{1}{Z} e^{-u - \frac{\alpha}{2} \nr{\cdot}^2} $ for some $u$ convex is equivalent to saying that $\log(\rho)$ is $\alpha$-strongly convex.
\begin{remark}[Singular targets]
In comparison to \cite{SANTAMBROGIO2016418}, we do not need to assume that $\mu$ is \emph{not} supported on a hyperplane in order to guarantee that the regularized problem \eqref{eq:reg-moment-meausre_wasserstein} admits a solution. Indeed, this is a consequence of the regularization of \eqref{eq:moment-measure_correlation} with the second moment of $\rho$, which allows to easily show the tightness of a minimizing sequence. In comparison, it was shown in \cite[Remark 4.2]{SANTAMBROGIO2016418} that whenever $\mu$ is supported on a hyperplane, problem \eqref{eq:moment-measure_correlation} has value $- \infty$. The construction of \cite[Remark 4.2]{SANTAMBROGIO2016418} cannot be translated to problem \eqref{eq:reg-moment-meausre_wasserstein}, and in this problem, it is actually not possible to find a sequence of measures whose entropy dominates the second moment (more precisely, at fixed zero mean and second moment, the measure that minimizes the entropy is the Gaussian and its entropy is of the order of minus the logarithm of the second moment).
\end{remark}

Before proving Proposition \ref{prop:existence}, we first state a lemma that exhibits the effect of a translation of a measure $\rho$ on the value of $F_\mu(\rho)$. The proof of this lemma is postponed at the end of the section.
\begin{lemma}\label{lemma:translations}
Let $\mu$ and $\rho$ be in $\Prob_1(\Rsp^d)$ and let $\bar \mu\coloneqq (\tau_v)_{\sharp}\mu$, $\bar \rho\coloneqq (\tau_v)_{\sharp}\rho$. Then 
\begin{align}
&F_\mu (\bar{\rho})= F_\mu ({\rho})+ \sca{v}{b(\mu)}+\frac{\alpha}{2}\|v\|^2+\alpha \sca{v}{b(\rho)};\label{eq:relation-changing-rho}\\
&F_{\bar \mu}({\rho})= F_{ \mu}({\rho})+\sca{v}{b(\rho)}.\label{eq:relation-changing-mu}
\end{align}
\end{lemma}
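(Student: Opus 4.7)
The plan is to decompose $F_\mu = \Ent + \T(\cdot,\mu) + \alpha M_2$ and compute the change of each summand under push-forward by $\tau_v$. For the entropy, a standard change of variables (using that $\bar\rho$ has density $x\mapsto \rho(x-v)$ when $\rho \ll \mathcal{L}^d$, and noting that if $\rho \not\ll \mathcal{L}^d$ then neither is $\bar\rho$) yields $\Ent(\bar\rho) = \Ent(\rho)$, so translations do not affect this term. For the second moment, expanding $\|x+v\|^2$ gives
\begin{equation*}
M_2(\bar\rho) \;=\; \int \|x+v\|^2 \dd \rho(x) \;=\; M_2(\rho) + 2\sca{v}{b(\rho)} + \|v\|^2,
\end{equation*}
which produces the $\sca{v}{b(\rho)}$ and $\|v\|^2$ contributions in~\eqref{eq:relation-changing-rho}.

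To handle the maximal correlation term I would use the primal (coupling) formulation of $\T$. The map $\gamma \mapsto (\tau_v \times \id)_\sharp \gamma$ is a bijection from $\Gamma(\rho,\mu)$ onto $\Gamma(\bar\rho,\mu)$, with inverse given by $(\tau_{-v}\times\id)_\sharp$, and the transport cost transforms as
\begin{equation*}
\int \sca{x+v}{y} \dd \gamma(x,y) \;=\; \int \sca{x}{y} \dd \gamma(x,y) + \sca{v}{b(\mu)}.
\end{equation*}
Passing to the supremum on both sides yields $\T(\bar\rho,\mu) = \T(\rho,\mu) + \sca{v}{b(\mu)}$. Summing the three contributions produces identity~\eqref{eq:relation-changing-rho}. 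For identity~\eqref{eq:relation-changing-mu}, only the transport term depends on the second argument; the symmetric argument using $\gamma \mapsto (\id \times \tau_v)_\sharp\gamma$, which bijects $\Gamma(\rho,\mu)$ with $\Gamma(\rho,\bar\mu)$, gives $\T(\rho,\bar\mu) = \T(\rho,\mu) + \sca{v}{b(\rho)}$, and the other two terms $\Ent(\rho)$ and $\alpha M_2(\rho)$ are unchanged.

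There is no substantial obstacle here: each of the three summands admits an elementary computation, and the only point requiring mild care is the verification that the coupling maps above are genuine bijections between the relevant sets of admissible transport plans, which follows immediately from their definitions together with the commutation of $\tau_v$ with the marginal projections.
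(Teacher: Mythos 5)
Your proof is correct and follows essentially the same route as the paper's: translation invariance of the entropy, the coupling bijection for the maximal-correlation term, and direct expansion of the second moment (your expansion correctly carries the cross term $2\sca{v}{b(\rho)}$, which the paper's own displayed computation omits). The only caveat is a normalization mismatch inherited from the paper itself: identity \eqref{eq:relation-changing-rho} as stated is consistent with $F_\mu = \Ent + \T(\cdot,\mu) + \tfrac{\alpha}{2}M_2$, so with the decomposition $F_\mu = \Ent + \T(\cdot,\mu) + \alpha M_2$ that you (and the paper's earlier definition) use, summing your three contributions actually yields $F_\mu(\rho)+\sca{v}{b(\mu)}+\alpha\|v\|^2+2\alpha\sca{v}{b(\rho)}$ rather than the displayed right-hand side.
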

\begin{proposition}\label{prop:translation}
One has 
\begin{equation*}
\min_{\rho\in \Prob_1(\Rsp^d)}F_{\mu}(\rho)= \min_{\rho\in \Prob_1(\Rsp^d), b(\rho)=-\frac{1}{\alpha}b(\mu)}F_{\mu}(\rho),
\end{equation*}
moreover any $\rho$ minimizer of $F_{\mu}$ satisfies 
\begin{equation}\label{eq:bminim}
b(\rho)=-\frac{1}{\alpha}b(\mu).
\end{equation}
In particular if $b(\mu)=0$, then 
\begin{equation*}
\min_{\rho\in \Prob_1(\Rsp^d)}F_{\mu}(\rho)= \min_{\rho\in \Prob_1(\Rsp^d), b(\rho)=0}F_{\mu}(\rho),
\end{equation*}
and any minimizer of $F_{\mu}$ satisfies $b(\rho)=0$.

\end{proposition}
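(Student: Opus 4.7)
The plan is to use Lemma~\ref{lemma:translations}, specifically identity~\eqref{eq:relation-changing-rho}, to show that any measure $\rho$ whose barycenter differs from $-\frac{1}{\alpha}b(\mu)$ can be strictly improved by a translation. More precisely, for fixed $\rho \in \Prob_1(\Rsp^d)$ and $v \in \Rsp^d$, if we set $\bar\rho = (\tau_v)_\sharp \rho$, then~\eqref{eq:relation-changing-rho} gives
\begin{equation*}
F_\mu(\bar\rho) - F_\mu(\rho) = \sca{v}{b(\mu) + \alpha b(\rho)} + \frac{\alpha}{2}\|v\|^2.
\end{equation*}
This is a strongly convex quadratic function of $v$, minimized at $v^* = -\frac{1}{\alpha}b(\mu) - b(\rho)$, with minimum value $-\frac{1}{2\alpha}\|b(\mu) + \alpha b(\rho)\|^2$.

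Given this, I would argue as follows. First, I observe that $b(\bar\rho) = b(\rho) + v$, so with the choice $v = v^*$, the translated measure $\bar\rho$ has barycenter exactly $-\frac{1}{\alpha}b(\mu)$. Since moreover $F_\mu(\bar\rho) \leq F_\mu(\rho)$, this shows that for every $\rho$ there exists a measure with barycenter $-\frac{1}{\alpha}b(\mu)$ whose $F_\mu$-value is no larger. This proves
\begin{equation*}
\inf_{\rho\in \Prob_1(\Rsp^d)} F_\mu(\rho) \geq \inf_{\rho\in \Prob_1(\Rsp^d),\, b(\rho)=-\frac{1}{\alpha}b(\mu)} F_\mu(\rho),
\end{equation*}
and the reverse inequality is trivial since the right-hand infimum is over a subset. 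Equality of the infima follows.

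For the second statement, suppose $\rho$ is a minimizer of $F_\mu$ and assume for contradiction that $b(\mu) + \alpha b(\rho) \neq 0$. Then $v^* \neq 0$, and the quadratic identity above yields
\begin{equation*}
F_\mu(\bar\rho) = F_\mu(\rho) - \frac{1}{2\alpha}\|b(\mu) + \alpha b(\rho)\|^2 < F_\mu(\rho),
\end{equation*}
contradicting the minimality of $\rho$. Hence any minimizer satisfies~\eqref{eq:bminim}. The special case $b(\mu)=0$ is immediate. There is no real obstacle here: the argument is essentially an exact computation once Lemma~\ref{lemma:translations} is in hand, the only mild subtlety being to notice that the perturbation identity is a genuinely quadratic (and hence strictly convex) function of $v$, which is what drives both conclusions simultaneously.
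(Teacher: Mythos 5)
Your proof is correct and follows essentially the same route as the paper: both rest entirely on identity~\eqref{eq:relation-changing-rho} of Lemma~\ref{lemma:translations} and the observation that the translation cost is a strictly convex quadratic in $v$ whose minimum value is $-\frac{1}{2\alpha}\|b(\mu)+\alpha b(\rho)\|^2$, which is exactly the penalty $\frac{\alpha}{2}\|b(\rho)+\frac{1}{\alpha}b(\mu)\|^2$ the paper extracts by parametrizing every $\rho$ as a translate of a measure with barycenter $-\frac{1}{\alpha}b(\mu)$. No gaps.
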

\begin{proof}
\begin{align*}
\min_{\rho\in \Prob_1(\Rsp^d)}F_{\mu}(\rho)= \min_{\tilde\rho\in \Prob_1(\Rsp^d), b(\tilde\rho)=-\frac{1}{\alpha}b(\mu), v=b(\rho)-\frac{1}{\alpha}b(\mu)} F_\mu ({\tilde\rho})+ \sca{v}{b(\mu)}+\frac{\alpha}{2}\|v\|^2+\alpha \sca{v}{b(\tilde\rho)}\\
=\min_{\tilde\rho\in \Prob_1(\Rsp^d), b(\tilde\rho)=-\frac{1}{\alpha}b(\mu), b(\rho)\in \Rsp^d} F_\mu ({\tilde\rho})+\frac{\alpha}{2}\|b(\rho)+\frac{1}{\alpha}b(\mu)\|^2=\min_{\tilde\rho\in \Prob_1(\Rsp^d), b(\tilde\rho)=-\frac{1}{\alpha}b(\mu)} F_\mu ({\tilde\rho}),
\end{align*}
where we used the fact that any $\rho$ can be written as $(\tau_{b(\rho)+\frac{1}{\alpha}b(\mu)})_{\sharp}\tilde \rho$ with $\tilde \rho\in \Prob_1(\Rsp^d)$ and $b(\tilde \rho)=-\frac{1}{\alpha}b(\mu)$.
\end{proof}

\begin{proof}[Proof of Proposition \ref{prop:existence}] We first assume that $b(\mu)=0$. Let $(\rho_n)_{n \geq1} \in \Prob_1^{a.c.}(\Rsp^d)$ be a minimizing sequence in \eqref{eq:reg-moment-meausre_wasserstein}. Since $b(\mu)=0$,  Proposition~\ref{prop:translation} allows us to assume that for all $n \geq 1$, $b(\rho_n) = 0$. We aim first to show the tightness of $(\rho_n)_{n \geq1}$. \\
Proposition 2.1 of \cite{SANTAMBROGIO2016418} guarantees the following lower bound on the entropy functional:
$$ \forall n \geq 1, \quad \Ent(\rho_n) \geq -C- M_1(\rho_n)^{1/2},$$ where $C$ is a computable dimensional constant. 
Similarly, by chosing the product measure as admissible plan between $\rho_n$ and $\mu$,  the following lower bound on the maximal correlation is satisfied: $\forall n \geq 1$,
$$ \T(\rho_n, \mu)\geq \int\sca{x}{y}\d(\rho_n\otimes\mu)(x,y)= \sca{\int x \, \d \rho_n(x)}{\int x\,\d \mu(x)}\geq - M_1(\rho_n) M_1(\mu).$$ 
We thus have for any $n\geq 1$ the lower bound
$$ F_\mu(\rho_n) \geq -C- M_1(\rho_n)^{1/2} - M_1(\mu) M_1(\rho_n) + \frac{\alpha}{2} M_2(\rho_n). $$
Using that $M_2(\rho_n) \geq M_1(\rho_n)^2$ following Jensen's inequality, we thus have for any $n\geq 1$ the lower bound
$$ F_\mu(\rho_n) \geq -C- M_1(\rho_n)^{1/2} - M_1(\mu) M_1(\rho_n) + \frac{\alpha}{2} M_1(\rho_n)^2. $$
This implies that the first moments $M_1(\rho_n)$ are  bounded by a constant independent of $n$ and therefore the sequence $(\rho_n)_{n \geq 1}$ is tight. Therefore we have a narrowly converging subsequence. Up to relabeling, we can assume $\rho_n \rightharpoonup \rho \in \Prob_1(\Rsp^d)$. Let us now show that $\rho$ minimizes \eqref{eq:reg-moment-meausre_wasserstein}.\\
From the fact that $(M_1(\rho_n))_{n \geq 1}$ is a bounded sequence, Proposition 2.1 of \cite{SANTAMBROGIO2016418} entails the lower semicontinuity of the entropy:
$$ \lim \inf_n \Ent(\rho_n) \geq  \Ent(\rho). $$ Similarly, from the fact that for all $n \geq 1$, $b( \rho_n) = 0$, Proposition 3.1  \cite{SANTAMBROGIO2016418} entails the lower semicontinuity of the maximum correlation:
$$ \lim \inf_n \T(\rho_n, \mu) \geq  \T(\rho, \mu). $$
We finally get the lower semicontinuity of the term $\rho \mapsto M_2(\rho)=\sca{\nr{\cdot}^2}{\rho}$ from the fact that $x \mapsto \nr{x}^2$ is a continuous and non-negative function on $\Rsp^d$ (Lemma 1.6 of \cite{santambrogio2015optimal}):
$$ \lim \inf_n \sca{\nr{\cdot}^2}{\rho_n} \geq  \sca{\nr{\cdot}^2}{\rho}. $$ We have then that $\rho\mapsto F_{\mu}(\rho)$ is l.s.c. with respect to the narrow convergence. Thus 
 $\rho$ is a minimizer of \eqref{eq:reg-moment-meausre_wasserstein} and the problem admits a solution.\\

We now describe the minimizers of \eqref{eq:reg-moment-meausre_wasserstein}. Let $\bar{\rho}$ be a minimizer and let $u : \Rsp^d \to \Rsp \cup \{+\infty\}$ be such that $\T(\bar{\rho}, \mu) = \sca{u}{\bar{\rho}} + \sca{u^*}{\mu}$. 
Arguing as in \cite[Theorem 4.1]{SANTAMBROGIO2016418}, we see that $\bar{\rho}$ is concentrated on the set where $$x \mapsto 1 + \log \bar{\rho}(x)  + u(x) + \frac{\alpha}{2} \nr{x}^2 $$ is minimized. In particular it follows that $\bar{\rho} > 0$ on $\{ u < +\infty\}$ (otherwise we would get that there exists $x \in \Rsp^d$ such that $u(x) < +\infty$ and $\bar{\rho}(x) = 0$, which would entail $1 + \log \bar{\rho}(x)  + u(x) + \frac{\alpha}{2} \nr{x}^2 = -\infty$, so that $F_\mu(\bar{\rho}) = -\infty$ which is not possible). Thus denoting $C = \mathrm{essinf}_{x \in \Rsp^d}  1 + \log \bar{\rho}(x)  + u(x) + \frac{\alpha}{2} \nr{x}^2$, we have on $\{ u < +\infty\}$ the formula 
$$ \log \bar{\rho}(x) = C - 1 - u(x) - \frac{\alpha}{2} \nr{x}^2,$$
that is 
\begin{equation}
\label{eq:expression-minimizer}
\bar{\rho}(x) = \frac{1}{Z} e^{-u(x) - \frac{\alpha}{2} \nr{x}^2}.
\end{equation}
This formula is also verified on the set $\{ u = +\infty\}$.\\
Let us finally show the uniqueness of the minimizer of \eqref{eq:reg-moment-meausre_wasserstein}. In expression \eqref{eq:expression-minimizer}, the function $u$ is convex. As such, this function is above an affine function, which allows to show that any minimizer $\bar{\rho}$ of \eqref{eq:reg-moment-meausre_wasserstein} admits finite moments of any order $p \geq 1$. In particular, they have bounded second moments. But the functional $\rho \mapsto \sca{\nr{\cdot}^2}{\rho}$ is strongly geodesically convex on $\Prob_2^{a.c.}(\Rsp^d)$, and so is $F_\mu$: this show the uniqueness of the minimizers of $F_\mu$.

We now remove the assumption that $b(\mu)=0$. Define
\begin{equation*}
\tilde \mu\coloneqq (\tau_{-b(\mu)})_{\sharp}\mu,
\end{equation*}
so that $b(\tilde \mu)=0$, 
 we claim that if $\tilde \rho$ is a minimizer for \eqref{eq:reg-moment-meausre_wasserstein} with datum $\tilde\mu$, then 
\begin{equation*}
\rho(x)\coloneqq \tilde \rho\left(x+\frac{1}{\alpha}b(\mu)\right)=(\tau_{-\frac{1}{\alpha}b(\mu)})_{\sharp}\tilde \rho\,
\end{equation*} 
is a minimizer of \eqref{eq:reg-moment-meausre_wasserstein} with datum $\mu$.
In order to show this we observe that $ \mu\coloneqq (\tau_{b(\mu)})_{\sharp}\tilde\mu$ and $b(\tilde\rho)=0$, so
\begin{align*}
{F_\mu(\rho)}=&{F_{\tilde\mu}(\tilde\rho)}- \frac{\|b(\mu)\|^2}{2\alpha}=\min_{\nu }{F_{\tilde\mu}(\nu)}- \frac{\|b(\mu)\|^2}{2\alpha}= 
\min_{\tilde\nu : b(\tilde \nu)=0}{F_{\tilde\mu}(\tilde \nu)}- \frac{\|b(\mu)\|^2}{2\alpha}\\&= \min_{\nu:b(\nu)=-\frac{1}{\alpha}b(\mu) }{F_\mu(\nu)}=\min_{\nu}{F_\mu(\nu)};
\end{align*}
where the equalities follow by  Proposition \ref{prop:translation}: the second one by using the fact that $\tilde \rho$ is a minimizer. Therefore $\rho= \exp(-u-\frac{\alpha}{2}\|\cdot\|^2)$ with $u$ convex and precisely
\begin{equation}\label{eq:u-trasl}
u(x)=\tilde u(x+\frac{1}{\alpha}b(\mu))+\frac{1}{2\alpha}\|b(\mu)\|^2+\sca{x}{b(\mu)},
\end{equation}
where $\tilde u$ is such that $\tilde \rho= \exp(-\tilde u-\frac{\alpha}{2}\|\cdot\|^2)$. From direct computations it follows that $\T(\rho, \mu) = \int u \dd \rho + \int u^* \dd \mu$.
\end{proof}

\begin{proof}[Proof of Lemma~\ref{lemma:translations}]
For the first part recall that $F_\mu (\bar{\rho}) = \Ent(\bar{\rho}) + \T(\bar{\rho}, \mu) + \frac{\alpha}{2} \sca{\nr{\cdot}^2}{\bar{\rho}}.$ Notice that $\Ent(\bar{\rho}) = \Ent(\rho)$ and that
\begin{align*}
\T(\bar{\rho}, \mu) &= \max_{\bar{\gamma} \in \Gamma(\bar{\rho}, \mu)} \int \sca{x}{y} \dd \bar{\gamma}(x,y) \\
&= \max_{\gamma \in \Gamma(\rho, \mu)} \int \sca{x+v}{y} \dd \gamma(x,y) \\
&= \T(\rho, \mu) + \sca{v}{\int y\, \dd \mu(y) }.
\end{align*}
Finally one has 
\begin{equation*}
\sca{\nr{\cdot}^2}{\bar{\rho}} = \int \nr{x + v}^2 \dd \rho(x) = \sca{\nr{\cdot}^2}{\rho} + \nr{v}^2+\sca{v}{b(\rho)}. \qedhere
\end{equation*}
For the second part use analogously that $\T({\rho}, \bar \mu) = T({\rho},  \mu)+\sca{v}{\int y \, \dd \rho(y) }$.
\end{proof}
\begin{proposition}\label{prop:essential-continuity}
Let $u$ as in Proposition \ref{prop:existence}. Let $\Omega\coloneqq\{u<+\infty\}$. Then $u=+\infty$ $\mathcal{H}^{d-1}$ on $\partial \Omega$. In particular $u$ is essentially continuous. 
\end{proposition}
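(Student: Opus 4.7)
The plan is to prove the statement by a Wasserstein first-variation argument: assuming by contradiction that $u$ is finite on a positive $\mathcal{H}^{d-1}$-measure subset of $\partial\Omega$, I would construct an outward-pushing perturbation of $\rho$ that strictly decreases $F_\mu$, contradicting the minimality established in Proposition~\ref{prop:existence}.

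First I would remark that, replacing $u$ by its interior lower-semicontinuous envelope $u^{\mathrm{sc}}(x) := \liminf_{y \to x,\, y \in \mathrm{int}(\Omega)} u(y)$, one still obtains a convex l.s.c.\ function; it differs from $u$ only on the Lebesgue-null set $\partial\Omega$ (so the density $\rho$ is unchanged) and is still a Kantorovich potential for $\T(\rho,\mu)$. Indeed $u \leq u^{\mathrm{sc}}$ entails $(u^{\mathrm{sc}})^* \leq u^*$, hence $\int (u^{\mathrm{sc}})^* \dd\mu \leq \int u^* \dd\mu$; but admissibility of $u^{\mathrm{sc}}$ in the Kantorovich dual forces equality. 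So we may assume $u = u^{\mathrm{sc}}$. Set $A := \{x \in \partial\Omega : u(x) < +\infty\}$ and suppose, aiming for a contradiction, that $\mathcal{H}^{d-1}(A) > 0$.

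Since $\partial\Omega$ is the boundary of a convex body, it is locally Lipschitz and the outward unit normal $n(x)$ exists at $\mathcal{H}^{d-1}$-a.e.\ $x \in \partial\Omega$; on $A$ the density $\rho(x) = e^{-u(x) - \frac{\alpha}{2}\|x\|^2}/Z$ is strictly positive. I choose a smooth compactly supported vector field $v$ on $\Rsp^d$ such that $\sca{v}{n} > 0$ on a subset of $A$ of positive $\mathcal{H}^{d-1}$-measure, and consider $\rho_\epsilon := (\mathrm{id} + \epsilon v)_\sharp \rho \in \Prob^{a.c}_2(\Rsp^d)$ for $\epsilon \geq 0$ small. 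The identity $\Ent(T_\sharp \rho) = \Ent(\rho) - \int \rho \log\det DT\, \dd x$ together with the divergence theorem on $\mathrm{int}(\Omega)$ yields
\begin{equation*}
\tfrac{d}{d\epsilon}\Ent(\rho_\epsilon)\big|_{0} = -\int \rho\,\mathrm{div}(v)\, \dd x = -\int_{\partial\Omega}\rho\, \sca{v}{n}\, \dd \mathcal{H}^{d-1} + \int_{\mathrm{int}(\Omega)} \sca{\nabla\rho}{v}\, \dd x ;
\end{equation*}
the first variation of $\T$, which is $\rho$-a.e.\ governed by the Brenier map $\nabla u$ (well defined on $\mathrm{int}(\Omega)$ via Alexandrov's theorem), is $\int \sca{\nabla u}{v}\, \rho\, \dd x$; and the second-moment term contributes $\alpha \int \sca{x}{v}\, \rho\, \dd x$. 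Since $\rho = e^{-u-\frac{\alpha}{2}\|x\|^2}/Z$ gives $\nabla\rho = -\rho(\nabla u + \alpha x)$ on $\mathrm{int}(\Omega)$, the three interior integrals cancel, and
\begin{equation*}
\tfrac{d}{d\epsilon}F_\mu(\rho_\epsilon)\big|_{0} = -\int_{\partial\Omega} \rho\, \sca{v}{n}\, \dd\mathcal{H}^{d-1} \;<\; 0,
\end{equation*}
contradicting the minimality of $\rho$. Hence $\mathcal{H}^{d-1}(A) = 0$ and $u = +\infty$ $\mathcal{H}^{d-1}$-a.e.\ on $\partial\Omega$. For essential continuity, at any point $x \in \partial\Omega$ with $u(x) = +\infty$ lower semi-continuity forces $u(y) \to +\infty$ as $y \to x$, so $u$ is continuous (with limit value $+\infty$) at $\mathcal{H}^{d-1}$-a.e.\ point of $\partial\Omega$.

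The main obstacle is the first-variation computation of $\Ent$ when $\rho$ has a jump discontinuity across $\partial\Omega$: a naive distributional integration by parts is ill-defined, and the boundary surface integral has to be extracted through the Lagrangian change-of-variables formula applied on $\mathrm{int}(\Omega)$. This surface integral is precisely the ``pressure force'' that would drive $\rho$ outward whenever $u$ fails to blow up on $\partial\Omega$, and its strict negativity is exactly what forces the essential continuity.
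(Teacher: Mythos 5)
Your approach --- a smooth Eulerian first variation along $\rho_\varepsilon=(\id+\varepsilon v)_\sharp\rho$ --- is genuinely different from the paper's, which follows Santambrogio's Theorem~4.3: there the competitor splits the mass of a boundary slab $A_\varepsilon$ in two and translates one half outward, so that the entropy gain $\rho(A_\varepsilon)\ln 2\gtrsim c_0\varepsilon$ comes from halving the density, and the increase of the transport term is controlled by an explicitly built dual competitor $u_\delta$ (via the auxiliary function $\chi$) with the tuning $\delta=c\varepsilon$. Your cancellation algebra is correct and the entropy and second-moment variations are exact, but there is a genuine gap at the one step that carries all the difficulty: the assertion that the first variation of $\T(\cdot,\mu)$ along your curve equals $\int\sca{\nabla u}{v}\,\rho\,\dd x$. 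For the contradiction you need an \emph{upper} bound on $\limsup_{\varepsilon\to0^+}\varepsilon^{-1}\bigl(\T(\rho_\varepsilon,\mu)-\T(\rho,\mu)\bigr)$. The only cheap bound in that direction --- reusing the potential $u$ as a dual competitor for $\rho_\varepsilon$ --- fails outright, because $u\equiv+\infty$ outside $\Omega$ and your perturbation pushes mass precisely outside $\Omega$; the easy plan-based comparison gives only the reverse inequality $\liminf\ge\int\sca{\nabla u}{v}\,\rho\,\dd x$. Your cited justification (Brenier/Alexandrov on $\interior(\Omega)$) addresses interior regularity, not this boundary issue, which is exactly why the paper must construct $u_\delta$ and choose $\delta=c\varepsilon$. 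The gap is plausibly fillable --- e.g.\ by pulling the optimal plan $\gamma_\varepsilon$ for $(\rho_\varepsilon,\mu)$ back through $(\id+\varepsilon v)^{-1}$ to get $\T(\rho_\varepsilon,\mu)-\T(\rho,\mu)\le\varepsilon\int\sca{v((\id+\varepsilon v)^{-1}(x))}{y}\,\dd\gamma_\varepsilon(x,y)$ and then invoking stability of optimal plans together with uniform integrability in $y$ (the second marginal is always $\mu\in\Prob_1(\Rsp^d)$) --- but that argument is nontrivial and entirely absent from your write-up, and you have misidentified the entropy term, rather than the transport term, as the main obstacle.

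Two secondary points would also need attention. First, you should take $\sca{v}{n}\ge0$ on \emph{all} of $\partial\Omega$, not merely positive on part of $A$: a priori the trace of $\rho$ need not vanish $\mathcal{H}^{d-1}$-a.e.\ off $A$ (that is what is being proved), so boundary contributions of the wrong sign are not excluded. Choosing $v(x)=\phi(x)(x-x_*)$ with $x_*\in\interior(\Omega)$ and $\phi\ge0$ a bump function fixes this by convexity of $\Omega$. Second, the Gauss--Green step requires $\rho\in W^{1,1}(\interior(\Omega))$ --- which does hold, since $\int\rho\,|\nabla u|\,\dd x=M_1(\mu)<+\infty$ --- together with the identification of the $W^{1,1}$-trace of $\rho$ on $\partial\Omega$ with $\frac1Z e^{-\mathrm{cl}(u)-\frac{\alpha}{2}\nr{\cdot}^2}$, positive on $A$ because $\mathrm{cl}(u)\le u<+\infty$ there; this is where your reduction to the l.s.c.\ envelope is actually used and deserves an explicit line.
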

\begin{proof}
From the proof of Proposition \ref{prop:existence} it is clear that it is enough to prove the Proposition by assuming that $b(\mu)=0$. 
We can proceed by following the steps of the proof of \cite[Theorem 4.3]{SANTAMBROGIO2016418}. Namely, assuming by contradiction that there exists a set of positive $\mathcal{H}^{d-1}$ measure in $\partial \Omega$ where $u<+\infty$ (and so $\rho>0$), we can build $\rho_{\varepsilon}$ competitor in $F_{\mu}$ and $u_{\delta}$ competitor in $\T({\mu},\rho_{\varepsilon})$. We refer the reader to  the proof of \cite[Theorem 4.3]{SANTAMBROGIO2016418} for the definitions of $\rho_{\varepsilon}$ and $u_{\delta}$. Then we have to choose $\delta$ in order to get a contradiction for $\varepsilon$ which goes to zero. By following the computations in the proof of the mentioned Theorem, one has by minimality of $\rho$, 
\begin{equation}\label{eq:competitor}
\Ent(\rho) + \T(\rho, \mu) + \frac{\alpha}{2}M_2(\rho)\leq \Ent(\rho_{\varepsilon}) + \T(\rho_{\varepsilon}, \mu) + \frac{\alpha}{2}M_2(\rho_{\varepsilon}).
\end{equation}
We compute 
\begin{align*}
M_2(\rho_{\varepsilon})=M_2(\rho_{|A_{\varepsilon}^c})+ \frac{1}{2}M_2(\rho_{|A_{\varepsilon}})+ \frac{1}{2}M_2(T_{\sharp}\rho_{|A_{\varepsilon}})
\end{align*}
and 
\begin{align*}
M_2(T_{\sharp}\rho_{|A_{\varepsilon}})=\int_{T(A_{\varepsilon})} |x|^{2}\dd T_{\sharp}\rho_{|A_{\varepsilon}}= \int_{A_{\varepsilon}} |x-\varepsilon e_1|^{2}\dd \rho_{|A_{\varepsilon}}
\end{align*}
so that 
\begin{align}\label{eq:M2}
M_2(\rho_{\varepsilon})= M_2(\rho)-\varepsilon \int_{A_{\varepsilon}}x_1\dd\rho+ \varepsilon^2\rho (A_{\varepsilon}).
\end{align}
Therefore from  \eqref{eq:competitor} and \eqref{eq:M2} we have, where again for the definition of $\chi$ (involved in the definition of $u_{\delta}$ we refer the reader to the proof of the aforementioned paper), 
\begin{equation*}
\rho({A_{\varepsilon}})\ln(2)\leq \frac{1}{2}\rho({A_{\varepsilon}})\delta \chi^{\ast}\left(\frac{\varepsilon e_1}{\delta}\right)+\delta \int \chi \dd \mu+\frac{\alpha}{2}\varepsilon^2\rho({A_{\varepsilon}})-\frac{\alpha}{2}\varepsilon \int_{A_{\varepsilon}}x_1 \dd\rho.
\end{equation*}
We recall that $\rho({A_{\varepsilon}})\geq c_0\varepsilon$  from the fact that $\rho$ is log-concave and by the absurd hypotheses and we chose $\delta=c\varepsilon$ with $c$ such that $c\int \chi \dd \mu<\frac{1}{2}c_0\ln(2)$ so that 
\begin{align*}
&\frac{1}{2}\rho({A_{\varepsilon}})\ln(2)\leq \frac{1}{2}\rho({A_{\varepsilon}})c\varepsilon\chi^{\ast}\left(\frac{\varepsilon e_1}{\delta}\right)+\frac{\alpha}{2}\varepsilon^2\rho({A_{\varepsilon}})-\frac{\alpha}{2}\varepsilon \int_{A_{\varepsilon}}x_1 \dd\rho\\
&\leq \frac{1}{2}\rho({A_{\varepsilon}})c\varepsilon\chi^{\ast}\left(\frac{\varepsilon e_1}{\delta}\right)+\frac{\alpha}{2}\varepsilon^2\rho({A_{\varepsilon}})+\frac{\alpha}{2}\varepsilon \rho({A_{\varepsilon}}) \tilde C,
\end{align*}
where $\tilde C$ is such that $A_{\varepsilon}\subseteq \{(x_1,x'): |x_1|\leq \tilde C, x'\in B\}$, 
 which gives the absurd by letting $\varepsilon$ go to zero.
\end{proof}
\begin{remark}
From the characterization of the unique solution of \eqref{eq:reg-moment-meausre_wasserstein} together with Proposition \ref{prop:essential-continuity} we can deduce that given a minimizer $\rho_{\mu}$ with $\rho_{\mu}=\frac{1}{Z}e^{-u-\frac{\alpha}{2}\|x\|^2}$ with $u:\Rsp^d\to +\infty$ convex and l.s.c. given in Proposition \ref{prop:existence}, then $u$ is the $\alpha$-Gaussian regularized moment map for $\mu$.
Indeed, $u$ has been chosen such that $\T(\bar{\rho}, \mu) = \sca{u}{\bar{\rho}} + \sca{u^*}{\mu}$, so from the classical optimal transport theory one has that $\nabla u$ is the map that pushes forward $\rho_{\mu}$ to $\mu$.
\end{remark}
Notice that any minimizer of problem \eqref{eq:reg-moment-meausre_wasserstein} has finite moments of any order.
\begin{remark}\label{remark:different-potential}
We observe for completeness that if $V : \Rsp^d \to \Rsp$ is a convex \emph{regularization potential}, that is assumed to be $1$-strongly convex, i.e. for any $x, y \in \Rsp^d$ and $t \in [0,1]$ it satisfies
\begin{equation*}
    V((1-t)x + t y ) \leq (1-t) V(x) + t V(y) - \frac{t(1-t)}{2} \nr{x-y}^2,
\end{equation*}
analogous results hold with 
\begin{align*}
F_\mu : \rho \mapsto \Ent(\rho) + \T(\rho, \mu) + \alpha \sca{V}{\rho}.
\end{align*} 
 In particular, any $V$ satisfying $\sca{V}{\rho} \geq c M_1(\rho)^\beta$ for some $c>0$ and $\beta > 1$ should work to guarantee existence of solutions to problem \eqref{eq:reg-moment-meausre_wasserstein}. For simplicity, we handle the case where $V=\frac{1}{2}\|x\|^2$.
\end{remark}

\section{Stability of the minimizers}
\label{sec:stability}
In this section we prove  a stability result for the minimizers of \eqref{eq:reg-moment-meausre_wasserstein}.

For any $\mu \in \Prob_1(\Rsp^d)$, denote $\rho_\mu \in \Prob_1(\Rsp^d)$ the unique minimizer of \eqref{eq:reg-moment-meausre_wasserstein}, that is
\begin{equation*}
    \rho_\mu = \arg \min_{\rho \in \Prob_1(\Rsp^d)}F_{\mu}(\rho)=\arg \min_{\rho \in \Prob_1(\Rsp^d)} \Ent(\rho) + \T(\rho, \mu) + \frac{\alpha}{2} M_2(\rho).
\end{equation*}
Also recall that $b(\mu) = \int_{\Rsp^d} y \dd \mu(y) \in \Rsp^d$ is the barycenter of $\mu$.

\begin{theorem}
\label{prop:stability-nobdd}
Let $M, B > 0$. Then for any $\mu, \nu \in \Prob_2(\Rsp^d)$ with $M_2(\mu), \, M_2(\nu)\leq M$ and $b(\mu), \,b(\nu)\in B(0,B)$,
\begin{equation}\label{eq:wass-rho_mu-rho_nu}
\Wass_2(\rho_\mu, \rho_\nu) \leq C(d, \alpha, M,B) \Wass_2(\mu, \nu)^{1/2}.
\end{equation}
\end{theorem}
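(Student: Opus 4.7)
The plan rests on two ingredients: the $\alpha$-strong geodesic convexity of $F_\mu$ and $F_\nu$ recorded in \eqref{eq:strong-geodesic-convexity-F-mu}, and the elementary identity
\begin{equation*}
\T(\rho,\sigma)=\tfrac{1}{2}\bigl(M_2(\rho)+M_2(\sigma)\bigr)-\tfrac{1}{2}\Wass_2^2(\rho,\sigma),
\end{equation*}
coming from the reformulation $\Wass_2^2(\rho,\sigma)=M_2(\rho)+M_2(\sigma)-2\T(\rho,\sigma)$. The first gives quadratic control of $\Wass_2(\rho_\mu,\rho_\nu)$ by a gap in $F$, and the second turns the only $\mu$- or $\nu$-dependent piece of $F$ into a squared Wasserstein distance that is straightforward to linearize.

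First, I apply \eqref{eq:strong-geodesic-convexity-F-mu} along the $\Wass_2$-geodesic from $\rho_\mu$ to $\rho_\nu$ (both absolutely continuous by Proposition~\ref{prop:existence}): the minimality of $\rho_\mu$ for $F_\mu$, combined with letting the interpolation parameter tend to zero, yields $\tfrac{\alpha}{2}\Wass_2^2(\rho_\mu,\rho_\nu)\leq F_\mu(\rho_\nu)-F_\mu(\rho_\mu)$. The symmetric inequality for $F_\nu$, summed with the previous one, cancels the entropy and second-moment contributions and leaves
\begin{equation*}
\alpha\Wass_2^2(\rho_\mu,\rho_\nu)\leq \bigl[\T(\rho_\nu,\mu)-\T(\rho_\nu,\nu)\bigr]+\bigl[\T(\rho_\mu,\nu)-\T(\rho_\mu,\mu)\bigr].
\end{equation*}
Rewriting each $\T$-term via the identity above, the $M_2(\mu)$ and $M_2(\nu)$ pieces cancel as well, and I am reduced to bounding $\tfrac{1}{2}\bigl[\Wass_2^2(\rho_\mu,\mu)-\Wass_2^2(\rho_\mu,\nu)+\Wass_2^2(\rho_\nu,\nu)-\Wass_2^2(\rho_\nu,\mu)\bigr]$. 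Using $|\Wass_2^2(\rho,\sigma_1)-\Wass_2^2(\rho,\sigma_2)|\leq \Wass_2(\sigma_1,\sigma_2)\bigl(\Wass_2(\rho,\sigma_1)+\Wass_2(\rho,\sigma_2)\bigr)$ (a reverse triangle inequality) on each of the two pairs, this bracket is at most $\Wass_2(\mu,\nu)$ times a sum of four Wasserstein distances between $\{\rho_\mu,\rho_\nu\}$ and $\{\mu,\nu\}$.

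The remaining task is to bound this sum by a constant depending only on $d,\alpha,M,B$, which via $\Wass_2(\rho,\sigma)\leq \sqrt{M_2(\rho)}+\sqrt{M_2(\sigma)}$ reduces to bounding $M_2(\rho_\mu)$ and $M_2(\rho_\nu)$ uniformly. For this I would revisit the proof of Proposition~\ref{prop:existence}: its lower bound $F_\mu(\rho_\mu)\geq -C-M_1(\rho_\mu)^{1/2}-M_1(\mu)M_1(\rho_\mu)+\tfrac{\alpha}{2}M_2(\rho_\mu)$, combined with $M_1\leq\sqrt{M_2}$, gives a coercive quadratic inequality in $\sqrt{M_2(\rho_\mu)}$ as soon as one has an upper bound on $F_\mu(\rho_\mu)$. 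Such an upper bound follows from evaluating $F_\mu$ at a convenient fixed competitor --- say a standard Gaussian $\tilde\rho$ translated by $-b(\mu)/\alpha$, whose second moment is at most $d+B^2/\alpha^2$, whose entropy is explicit, and whose maximal correlation with $\mu$ is at most $\sqrt{M_2(\tilde\rho)\,M_2(\mu)}$ by Cauchy--Schwarz on the product coupling. Putting the pieces together yields $\alpha\Wass_2^2(\rho_\mu,\rho_\nu)\leq C(d,\alpha,M,B)\Wass_2(\mu,\nu)$, hence \eqref{eq:wass-rho_mu-rho_nu}.

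Conceptually, the first two paragraphs are where the $1/2$-H\"older exponent emerges: strong convexity produces a quadratic on the left, while the $\T$-identity linearizes the gap into a first-order expression in $\Wass_2(\mu,\nu)$ on the right. The main technical obstacle is the uniform second-moment bound, where the dependence on $B$ enters through $b(\rho_\mu)=-b(\mu)/\alpha$ (Proposition~\ref{prop:translation}): this forces any natural test competitor to carry a matching shift and inflate its own second moment accordingly. I do not anticipate deep difficulties here, only careful bookkeeping of constants.
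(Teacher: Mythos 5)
Your proposal is correct and follows essentially the same route as the paper: strong geodesic convexity of $F_\mu$ and $F_\nu$ yields $\frac{\alpha}{2}\Wass_2^2(\rho_\mu,\rho_\nu)\leq F_\mu(\rho_\nu)-F_\nu(\rho_\nu)+F_\nu(\rho_\mu)-F_\mu(\rho_\mu)$, the gap reduces to differences of $\T$ rewritten as squared Wasserstein distances and linearized by the reverse triangle inequality, and the remaining sum is controlled by a uniform bound on $M_2(\rho_\mu)$, $M_2(\rho_\nu)$ obtained from a Gaussian competitor. The only (harmless) divergence is in that last step: you get coercivity from the entropy lower bound $\Ent\geq -C-M_1^{1/2}$ already used in the existence proof, whereas the paper instead invokes Lemma~\ref{lemma:max_entropy} (the Gaussian minimizes $\Ent$ at fixed second moment) together with $b(\rho_\mu)=-b(\mu)/\alpha$; both yield the required $M_2(\rho_\mu)\leq C(d,\alpha,M,B)$.
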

\begin{remark}\label{rmk:prox}
From the previous proposition it immediately follows that the map $\mu\mapsto \rho_{\mu}=\arg \min_{\rho} F_{\mu}$ is $\frac{1}{2}$-Hölder continuous on Wasserstein balls.  Normally, given a functional $\mathcal{G}$ defined on the space of probability measures, one refers to the map $\mu\mapsto \arg \min_{\rho} \mathcal{G}(\rho)+\frac{1}{2\tau}W^2_{2}(\rho,\mu)$ for $\tau>0$ as proximity operator of the functional $\mathcal{G}$. For $\alpha= \frac{1}{2}$ our result can be read as $\frac{1}{2}$-Hölderianity of the proximity operator of the Entropy. An analogous $\frac{1}{2}$-Hölderianity result for minimizers of a different class of functionals, that is Wasserstein projections, can be found in \cite[Proposition 2.3.4]{ARC}  (see also \cite[Section 5]{DMSV}). 
\end{remark}

In the proof we use some features of Gaussian probability measures. We recall for completeness that given $\gamma\in \Prob(\Rsp^d)$ a Gaussian probability measure which is isotropic (i.e.\ its covariance matrix is a multiple of the identity) and centered at the origin, its entropy, by a direct computation, can be explicitly expressed in terms of its second moment: 
\begin{equation}
\Ent(\gamma)= -\frac{d}{2}\log\left(\frac{2}{d}\pi e M_2(\gamma)\right).
\end{equation}
In addition the following Lemma holds. 
\begin{lemma}\label{lemma:max_entropy}
For any $M>0$,
\begin{align*}
\min_{\rho \in \Prob_2(\Rsp^d), \, b(\rho)=0, \, M_2(\rho)=M}\Ent(\rho)= -\frac{d}{2}\log\left(\frac{2}{d}\pi e M\right).
\end{align*}
\end{lemma}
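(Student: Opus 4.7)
The plan is to recognize this as the classical fact that, at fixed mean and second moment, the Gaussian minimizes $\Ent$ (equivalently, maximizes differential entropy). Unlike the more general statement at fixed covariance matrix, here only $b(\rho) = 0$ and $M_2(\rho) = M$ are prescribed, so the candidate minimizer should be the centered \emph{isotropic} Gaussian $\gamma$ with covariance $\tfrac{M}{d} I_d$, whose entropy is already identified in the preceding paragraph as $-\tfrac{d}{2}\log(\tfrac{2}{d}\pi e M)$. So the problem reduces to showing $\Ent(\rho) \geq \Ent(\gamma)$ for every admissible $\rho$.

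The key idea is a one-line Gibbs-style comparison with $\gamma$. One may assume $\rho \ll \mathcal{L}^d$, otherwise $\Ent(\rho) = +\infty$ and there is nothing to prove. Writing the log-density
\begin{equation*}
\log \gamma(x) = -\frac{d}{2}\log\!\left(\frac{2\pi M}{d}\right) - \frac{d}{2M}\|x\|^2,
\end{equation*}
one sees that $\log \gamma$ is affine in $\|x\|^2$, so $\int \log \gamma \, \dd\rho$ depends on $\rho$ only through $M_2(\rho)$. Since $M_2(\rho) = M = M_2(\gamma)$, we get
\begin{equation*}
\int \log \gamma \, \dd\rho = \int \log \gamma \, \dd\gamma = \Ent(\gamma).
\end{equation*}
Then non-negativity of the Kullback–Leibler divergence,
\begin{equation*}
0 \leq \KL(\rho \,|\, \gamma) = \Ent(\rho) - \int \log \gamma \, \dd\rho = \Ent(\rho) - \Ent(\gamma),
\end{equation*}
yields the desired lower bound, with equality attained at $\rho = \gamma$, establishing the identity.

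I do not foresee a real obstacle: the constraint $b(\rho) = 0$ is not even strictly needed (since $\|x\|^2$ alone appears), the mass-integrability of $\log \gamma$ against any second-moment-finite $\rho$ is immediate from its quadratic growth, and the positivity of $\KL(\rho|\gamma)$ is standard (via Jensen applied to $-\log$). The only care to take is to note that the argument holds regardless of whether $\rho$ has a smooth density, and that the infimum is indeed attained (by $\gamma$ itself), so it is a minimum rather than merely an infimum.
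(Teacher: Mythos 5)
Your proof is correct and follows essentially the same route as the paper: compare $\rho$ with the centered isotropic Gaussian of second moment $M$ via non-negativity of the Kullback--Leibler divergence, observing that $\int \log\gamma\,\dd\rho$ depends on $\rho$ only through $M_2(\rho)$. Your side remarks (that $b(\rho)=0$ is not needed and that the minimum is attained at $\gamma$) are accurate and consistent with the paper's argument.
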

\begin{proof}
Let $\gamma_{0,M}$ be the Gaussian isotropic probability density with zero mean and such that $M_2(\gamma_{0,M})=M$, i.e. $\gamma_{0,M}(x)\coloneqq \frac{1}{(2\pi M/d)^{\frac{d}{2}}}e^{-\frac{d\|x\|^2}{2M}}$. We show that for any $\rho \in \Prob_2(\Rsp^d)$ with  $ b(\rho)=0, \, M_2(\rho)=M$, $\Ent(\rho)\geq \Ent(\gamma_{0,M})$. We can assume $\rho$ to be absolutely continuous with respect to the Lebesgue measure. By writing  
\begin{align}\label{eq:entr_kl}
\Ent(\rho)-\int \rho\log(\gamma_{0,M})\dd x = \int \rho \log\left (\frac{\rho}{\gamma_{0,M}}\right)\dd x \geq 0, 
\end{align}
where the inequality follows from the fact that 
$\int \rho \log\left (\frac{\rho}{\gamma_{0,M}}\right)\dd x= D_{KL}(\rho|\gamma_{0,M})$ is the Kullbach Leibler divergence which is known to be non negative (see e.g.\ \cite{Kullback1951OnIA}). Therefore from \eqref{eq:entr_kl} one has 
\begin{align*}
\Ent(\rho)\geq \int \rho\log(\gamma_{0,M})\dd x= \int \rho\log\left(\frac{1}{(2\pi M/d)^{\frac{d}{2}}}e^{-\frac{d\|x\|^2}{2M}}\right)\dd x\\= -\frac{d}{2M}\int\rho  \|x\|^2\dd x-\frac{d}{2}\log \left(\frac{2}{d}\pi M \right), 
\end{align*}
and the conclusion follows by recalling that $M_2(\rho)=M$.
\end{proof}
\begin{proof}[Proof of Theorem \ref{prop:stability-nobdd}.]
Let $\mu, \nu$ as in the hypotheses. Denote $(\rho_t)_{0 \leq t \leq 1} \in \Prob_2^{a.c.}(\Rsp^d)$ the unique $2$-Wasserstein geodesic connecting $\rho_\mu$ and $\rho_\nu$. We leverage the definitions of $\rho_\mu, \rho_\nu$ and the strong geodesic convexity of $F_\mu$ and $F_\nu$ (see \eqref{eq:strong-geodesic-convexity-F-mu}) to write:
    \begin{align*}
         F_\mu(\rho_0) = F_\mu(\rho_\mu) \leq F_\mu(\rho_{1/2}) &\leq \frac{1}{2} F_\mu(\rho_0) + \frac{1}{2} F_\mu(\rho_1) - \frac{\alpha}{8} \Wass_2^2(\rho_0, \rho_1), \\
         F_\nu(\rho_1) = F_\nu(\rho_\nu) \leq F_\nu(\rho_{1/2}) &\leq \frac{1}{2} F_\nu(\rho_0) + \frac{1}{2} F_\nu(\rho_1) - \frac{\alpha}{8} \Wass_2^2(\rho_0, \rho_1).
    \end{align*}
    Combining these two inequalities, using that $\rho_0 = \rho_\mu$ and $\rho_1 = \rho_\nu$ and rearranging yields:
    \begin{equation}
    \label{eq:result-strong-convexity}
        \frac{\alpha}{2} \Wass_2^2(\rho_\mu, \rho_\nu) \leq F_\mu(\rho_\nu) - F_\nu(\rho_\nu) + F_\nu(\rho_\mu) - F_\mu(\rho_\mu).
    \end{equation}
    But 
    \begin{align}
    \label{eq:diff-Fmu-Fnu}
        F_\mu(\rho_\nu) - F_\nu(\rho_\nu) &= \T(\rho_\nu, \mu) - \T(\rho_\nu, \nu) \notag
    \end{align}
    and we recall that 
    \begin{equation*}
    \T(\rho_\nu, \mu)= -W_2^2(\rho_\nu, \mu) + M_2(\rho_\nu) + M_2(\mu).
    \end{equation*}
    By using the two previous observations, the right hand side of \eqref{eq:result-strong-convexity} becomes 
    \begin{align*}
    &W_2^2(\rho_\nu, \nu)-W_2^2(\rho_\nu, \mu)+W_2^2(\rho_\mu, \mu)-W_2^2(\rho_\mu, \nu)\\
    &\leq ( W_2(\rho_\nu, \nu)+W_2(\rho_\nu, \mu))W_2(\mu, \nu)+ ( W_2(\rho_\mu, \mu)+W_2(\rho_\mu, \nu))W_2(\mu, \nu), 
    \end{align*}
    by using the triangular inequality. 
    So it remains to bound 
    \begin{equation}\label{eq:sum-wasser}
    W_2(\rho_\nu, \nu)+W_2(\rho_\nu, \mu)+W_2(\rho_\mu, \mu)+W_2(\rho_\mu, \nu).
    \end{equation}
    We proceed estimating $\Wass_2^2(\rho_\mu, \mu)$. 
    We know that 
    \begin{equation}\label{eq:wass_sum}
     \Wass_2^2(\rho_\mu, \mu) \leq M_2(\rho_\mu) + M_2(\mu) - 2 \sca{b(\rho_\mu)}{b(\mu)}=M_2(\rho_\mu) + M_2(\mu)+ \frac{2}{\alpha}\|b(\mu)\|^2.
    \end{equation}
    where we used \eqref{eq:bminim}. We claim that 
    \begin{equation}\label{eq:bound-2ndmoment-rho}
    M_2(\rho_\mu)\leq C(d,\alpha,M,B).
    \end{equation}
    %with $\lim_{\alpha\to 0}C(d,\alpha,M,B)=+\infty$. 
    From the definition of $\rho_{\mu}$ we have that $F_{\mu}(\rho_{\mu})\leq F_{\mu}(\gamma_\mu)$ where $\gamma_{\mu}$ is an isotropic Gaussian probability density on $\Rsp^d$ centered in the origin with $M_2(\gamma_{\mu})=M_2(\mu)+d$, which yields 
    %\Ent(\rho) + \T(\rho, \mu) + \frac{\alpha}{2} M_2(\rho)+M_2(\gamma_{\mu})-.
    \begin{align}\label{eq:bound_second_moment_partial}
    \frac{\alpha}{2} M_2(\rho_{\mu})+\Ent(\rho_\mu)\leq \Ent(\gamma_\mu)+\T(\gamma_{\mu}, \mu)+\frac{\alpha}{2} M_2(\gamma_{\mu}) -\T(\rho_{\mu}, \mu).
    \end{align}
    We proceed here bounding term by term. 
    For the maximal correlation one has 
    \begin{equation*}
    \T(\gamma_{\mu}, \mu)\leq \sqrt{M_2(\gamma_{\mu})M_2(\mu)},
    \end{equation*}
    and 
    \begin{equation*}
    \T(\rho_{\mu}, \mu)\geq b(\rho_\mu)\cdot b(\mu)= -\frac{1}{\alpha}\|b(\mu)\|^2,
    \end{equation*}
    again by using \eqref{eq:bminim}.
    For the first entropy term we have that 
    \begin{equation}\label{eq:entropy_gauss}
    \Ent(\gamma_\mu)= -\frac{d}{2}\log(\frac{2}{d}e\pi M_2(\gamma_{\mu})),
    \end{equation}
    for the second one we use the  entropy inequality of Lemma \ref{lemma:max_entropy} which gives 
    \begin{equation}\label{eq:max_entropy}
    \Ent(\rho_\mu)\geq \Ent(\gamma_{\rho_\mu})= -\frac{d}{2}\log(\frac{2}{d}e \pi M_2(\rho_\mu)),
    \end{equation}
    where $\gamma_{\rho_\mu}$ is an isotropic Gaussian probability distribution centered at zero, with second moment equal to $M_2(\rho_{\mu})$.
    %with covariance matrix $\mathds{1}_d M_2(\mu)$. 
    Summing up, reordering the terms in \eqref{eq:bound_second_moment_partial} and using 
    \eqref{eq:entropy_gauss} and \eqref{eq:max_entropy}, one has 
\begin{align*}
&\frac{\alpha}{2} M_2(\rho_{\mu})- \frac{d}{2}\log(\frac{2}{d}e\pi M_2(\rho_{\mu}))= \frac{\alpha}{2} M_2(\rho_{\mu})+\Ent(\gamma_{\rho_\mu})\leq\frac{\alpha}{2} M_2(\rho_{\mu})+ \Ent(\rho_\mu)\\
&\leq \Ent(\gamma_\mu)+ \frac{\alpha}{2} M_2(\gamma_{\mu})+\T(\gamma_{\mu}, \mu)-\T(\rho_{\mu}, \mu) \\& 
\leq -\frac{d}{2}\log\left(\frac{2}{d}e\pi M_2(\mu)+d\right)+ \frac{\alpha}{2}(M_2({\mu})+d)+M_2(\mu)+d+\frac{1}{\alpha}\|b(\mu)\|^2,
\end{align*}
that gives 
\begin{align*}
M_2(\rho_{\mu})-{\frac{d}{\alpha}}\log( M_2(\rho_{\mu}))\leq-{\frac{d}{\alpha}}\log(M_2({\mu})+d)+ \left(\frac{2}{\alpha}+1\right)(M+d)+\frac{2}{\alpha^2}B^2\\\leq -{\frac{d}{\alpha}}\log(d) +\left(\frac{2}{\alpha}+1\right)(M+d)+\frac{2}{\alpha^2}B^2 \eqqcolon \bar C(d, \alpha, M, B).
\end{align*}
Now there exists $A(\frac{\alpha}{d})$ such that for $M_2(\rho_\mu)>A(\frac{\alpha}{d})$, ${\frac{d}{\alpha}}\log( M_2(\rho_{\mu}))\leq \frac{M_2(\rho_{\mu})}{2}$. Therefore \eqref{eq:bound-2ndmoment-rho} follows with $C(d,\alpha,M,B)= \max\{A(\frac{\alpha}{d}),\bar C(d, \alpha, M, B)\}$.
So by using this bound in \eqref{eq:wass_sum}, one has that 
\begin{equation*}
\Wass_2^2(\rho_\mu, \mu)\leq C(d,\alpha,M,B), 
\end{equation*}
where the constant  $C$ is intended up to relabeling. 
The other terms in \eqref{eq:sum-wasser} can be treated analogously. This completes the proof of \eqref{eq:wass-rho_mu-rho_nu}. 
 \end{proof}
 \begin{remark}
{From the previous proof one can see that the constant  $C(d, \alpha, M,B)$ can be explicitly computed and  degenerates for $\alpha \to 0$.}
 \end{remark}

\section{Regularized moment measures are minimizers}
\label{sec:sufficient}
In Section \ref{sec:existence} we showed that Problem \eqref{eq:reg-moment-meausre_wasserstein} has a unique solution and that its solution is a log concave probability density of the form $\frac{1}{Z} e^{-u - \frac{\alpha}{2} \nr{\cdot}^2}$, where $u$ is a solution to Problem \eqref{eq:alpha-mm}. 
We prove here that every log-concave density of the type $\rho = e^{-u - \frac{\alpha}{2} \nr{\cdot}^2}$ with  $u : \Rsp^d \to \Rsp \cup \{+ \infty \}$ essentially continuous convex function is a solution of  \eqref{eq:reg-moment-meausre_wasserstein} for a suitable measure $\mu$. 

The computations closely follow those in \cite{SANTAMBROGIO2016418}.
We first observe that by following the proof of Proposition 3.1, point $(5)$ of \cite{SANTAMBROGIO2016418}, one can deduce the following Lemma.  
\begin{lemma}\label{lemma:approx}
 Let $\rho$ and $\mu$ be in $\Prob_1(\Rsp^d)$, $b(\mu)=0$. Then there exists a sequence of compactly supported probability measures $(\rho_n)_n$ weakly converging to $\rho$, such that $F_{\mu}(\rho_n)\to F_{\mu}(\rho)$.    
\end{lemma}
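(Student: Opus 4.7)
The plan is to use the classical truncation construction. Set $Z_n := \rho(B(0,n))$, which tends to $1$ as $n \to \infty$, and define
$$ \rho_n := \frac{1}{Z_n}\, \rho\big|_{B(0,n)}, $$
a probability measure with compact support. Narrow convergence $\rho_n \rightharpoonup \rho$ is immediate since $Z_n \to 1$ and the measures $\rho\big|_{B(0,n)}$ increase to $\rho$. It then suffices to verify that each of the three summands of $F_\mu$ converges to the corresponding summand for $\rho$ (either to the same finite value or to $+\infty$).

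The second moment is straightforward: $M_2(\rho_n) = Z_n^{-1} \int_{B(0,n)} \|x\|^2 \dd\rho \to M_2(\rho)$ by monotone convergence. For the entropy, if $\rho$ is not absolutely continuous then the singular part of $\rho$ is eventually captured by $B(0,n)$, so $\rho_n$ inherits a singular part and $\Ent(\rho_n) = +\infty = \Ent(\rho)$. In the absolutely continuous case one writes
$$ \Ent(\rho_n) = \frac{1}{Z_n}\int_{B(0,n)} \rho \log\rho \,\dd x - \log Z_n, $$
and monotone convergence applied separately to the positive and negative parts of $\rho\log\rho$ gives the limit; the negative part is $\rho$-integrable globally thanks to the lower bound $\Ent(\rho) \geq -C - M_1(\rho)^{1/2}$ from Proposition~2.1 of \cite{SANTAMBROGIO2016418}.

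The main obstacle is the convergence of the maximal correlation, $\T(\rho_n, \mu) \to \T(\rho, \mu)$. The $\liminf$ inequality is exactly Proposition~3.1(5) of \cite{SANTAMBROGIO2016418}, which establishes the lower semicontinuity of $\rho \mapsto \T(\rho,\mu)$ under narrow convergence when $b(\mu) = 0$. For the $\limsup$, I would work dually: assuming first $\T(\rho,\mu) < +\infty$, fix a convex l.s.c.\ function $u$ realising $\T(\rho,\mu) = \int u \,\dd\rho + \int u^* \,\dd\mu$. The duality inequality applied to $(\rho_n, \mu)$ gives
$$ \T(\rho_n, \mu) \leq \int u \,\dd\rho_n + \int u^* \,\dd\mu, $$
so it suffices to check that $\int u \,\dd\rho_n \to \int u \,\dd\rho$. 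Since $u$ is convex and finite on an open set, it admits an affine minorant $u(x) \geq -a - b\|x\|$ coming from any subgradient in the interior of its domain; combined with $\rho \in \Prob_1$ this gives $u^- \in L^1(\rho)$, and monotone convergence applied separately to $u^+$ and $u^-$ on the increasing balls $B(0,n)$ yields the limit.

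When $\T(\rho,\mu) = +\infty$, the liminf inequality already forces $\T(\rho_n, \mu) \to +\infty$, and similarly if any of the other two summands diverges, the coercivity estimates used in the proof of Proposition~\ref{prop:existence} (the lower bounds on $\Ent$ and $\T$ in terms of first moments) imply that $F_\mu(\rho_n) \to +\infty = F_\mu(\rho)$. The delicate step is the affine-minorant argument needed to push monotone convergence through the optimal convex potential $u$ in the $\limsup$ inequality; all other estimates reduce to standard monotone convergence on the increasing family $B(0,n)$.
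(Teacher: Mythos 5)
Your truncation argument is correct and is essentially the paper's intended proof: the paper simply defers to the construction in Proposition~3.1(5) of \cite{SANTAMBROGIO2016418} (restriction to balls and renormalization), and the term-by-term verification for the entropy, the maximal correlation (lower semicontinuity for the $\liminf$, the optimal dual potential plus an affine minorant and monotone convergence for the $\limsup$), and the second moment proceeds exactly as you describe. The only slight imprecision is in the attribution for the $\liminf$: the lower semicontinuity of $\T(\cdot,\mu)$ under narrow convergence (available here because $b(\mu)=0$ and the truncated sequence has uniformly bounded first moments) is a separate point of that proposition, whereas point (5) is the approximation statement itself; this does not affect the validity of the argument.
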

\begin{proposition} \label{prop:suff-centered}
 Let $u : \Rsp^d \to \Rsp \cup \{+ \infty \}$ essentially continuous convex function, consider $\bar \rho=  e^{-u - \frac{\alpha}{2} \nr{\cdot}^2}$ and suppose $(\nabla u)_\# \bar \rho = \mu$, then $\bar \rho\in  \Prob_1(\Rsp^d)$  and it solves \eqref{eq:reg-moment-meausre_wasserstein} with datum $\mu$.
\end{proposition}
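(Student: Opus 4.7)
The plan is to exploit the strong geodesic convexity of $F_\mu$ recalled in \eqref{eq:strong-geodesic-convexity-F-mu} together with a first-variation argument along the $\Wass_2$-geodesic from $\bar\rho$ to an arbitrary competitor $\rho$. The driving observation is that the identity $\log\bar\rho = -u - \tfrac{\alpha}{2}\nr{x}^2$ entails the pointwise cancellation
\[
\nabla \log \bar\rho + \nabla u + \alpha x = 0 \quad \bar\rho\text{-a.e.},
\]
which is precisely the ``Wasserstein stationarity'' condition for the sum $\Ent + \T(\cdot,\mu) + \tfrac{\alpha}{2}M_2$ at $\bar\rho$.

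First one checks that $\bar\rho \in \Prob_1(\Rsp^d)$: convexity of $u$ provides an affine lower bound, hence $\bar\rho$ has Gaussian-type tails and finite moments of every order, while $\int \bar\rho\,dx = \mu(\Rsp^d) = 1$ since $(\nabla u)_\sharp \bar\rho = \mu$ preserves total mass. Using Proposition~\ref{prop:translation} together with equation~\eqref{eq:relation-changing-mu}, we reduce to the case $b(\mu) = 0$, which activates Lemma~\ref{lemma:approx} and allows the minimality inequality to be checked against compactly supported absolutely continuous competitors $\rho$. For such a $\rho$, let $T$ be the Brenier map from $\bar\rho$ to $\rho$ and set $\rho_t := ((1-t)\mathrm{id}+tT)_{\sharp}\bar\rho$. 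Displacement convexity of $\Ent$ (McCann), of $\T(\cdot,\mu)$ (\cite[Proposition~3.3]{SANTAMBROGIO2016418}, recalling that $\nabla u$ is the Brenier map from $\bar\rho$ to $\mu$), and the $\alpha$-strong geodesic convexity of $\tfrac{\alpha}{2}M_2$ respectively yield
\begin{align*}
\Ent(\rho) - \Ent(\bar\rho) &\geq \int \nabla \log \bar\rho \cdot (T - \mathrm{id}) \, d\bar\rho, \\
\T(\rho,\mu) - \T(\bar\rho,\mu) &\geq \int \nabla u \cdot (T - \mathrm{id}) \, d\bar\rho, \\
\tfrac{\alpha}{2}[M_2(\rho) - M_2(\bar\rho)] &\geq \alpha \int x \cdot (T - \mathrm{id}) \, d\bar\rho + \tfrac{\alpha}{2}\Wass_2^2(\bar\rho,\rho).
\end{align*}
Summing the three and invoking the pointwise cancellation, the three linear contributions collapse to zero and one is left with $F_\mu(\rho) - F_\mu(\bar\rho) \geq \tfrac{\alpha}{2}\Wass_2^2(\bar\rho,\rho) \geq 0$, establishing that $\bar\rho$ minimizes \eqref{eq:reg-moment-meausre_wasserstein}.

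The main subtlety, on which I would take particular care, is the rigorous justification of the first-variation inequality for $\T(\cdot,\mu)$: one needs to identify $\nabla u$ as the correct ``Wasserstein gradient'' direction at $\bar\rho$ and to verify integrability of $\int \nabla u \cdot (T-\mathrm{id})\,d\bar\rho$. The integrability follows from $T - \mathrm{id} \in L^2(\bar\rho)$ (by finiteness of $\Wass_2(\bar\rho,\rho)$) together with $\nabla u \in L^2(\bar\rho)$ (since $(\nabla u)_\sharp\bar\rho = \mu$ has finite second moment after the compactly supported reduction); essential continuity of $u$ and a.e.\ differentiability of convex functions ensure that $\nabla\log\bar\rho = -\nabla u - \alpha x$ is defined $\bar\rho$-a.e. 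The manipulations closely mirror those of \cite[Theorem~4.3]{SANTAMBROGIO2016418}, the extra regularization term $\tfrac{\alpha}{2}M_2$ integrating cleanly because it is smooth with explicit first variation $\alpha \int x\cdot(T-\mathrm{id})\,d\bar\rho$.
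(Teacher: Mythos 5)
Your overall architecture is the same as the paper's: reduce to $b(\mu)=0$ and, via Lemma~\ref{lemma:approx}, to compactly supported competitors; move along the displacement interpolation from $\bar\rho$ to the competitor; and show that the three first-order terms cancel thanks to $\nabla\log\bar\rho+\nabla u+\alpha x=0$. Your inequalities for $\T(\cdot,\mu)$ (test $\T(\rho,\mu)$ with the plan $(T,\nabla u)_\sharp\bar\rho$) and for $\tfrac{\alpha}{2}M_2$ (an exact quadratic expansion) are fine. The genuine gap is the entropy inequality
\[
\Ent(\rho)-\Ent(\bar\rho)\ \geq\ \int\nabla\log\bar\rho\cdot(T-\mathrm{id})\,\dd\bar\rho,
\]
with $\nabla\log\bar\rho$ read, as you read it, as the pointwise a.e.\ gradient $-\nabla u-\alpha x$. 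Displacement convexity only yields $\Ent(\rho)-\Ent(\bar\rho)\geq -\int(\Delta^{\ac}v-d)\,\dd\bar\rho$ where $T=\nabla v$; passing from this to your form is an integration by parts whose boundary contribution on $\partial\{u<+\infty\}$ has the right sign \emph{only because} $u$ is essentially continuous. You instead invoke essential continuity merely to say that $\nabla\log\bar\rho$ exists a.e., which is not where it is needed (a.e.\ differentiability of convex functions is automatic). That this is not a technicality: take $d=1$, $\bar\rho$ the uniform density on $[0,1]$ (a log-concave density whose potential is \emph{not} essentially continuous) and $\rho$ uniform on $[0,2]$, so $T(x)=2x$. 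Then the a.e.\ gradient of $\log\bar\rho$ vanishes on the support, your right-hand side is $0$, yet $\Ent(\rho)-\Ent(\bar\rho)=-\log 2<0$. So the inequality you rely on is false for general log-concave densities, and the whole burden of the proof is precisely to justify it under the essential continuity hypothesis.

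The paper supplies exactly this missing justification by splitting the integration by parts in two. The identity-map part is the inequality $\int d\, e^{-u-\frac{\alpha}{2}\nr{\cdot}^2}\dd x-\int x\cdot(\nabla u+\alpha x)\, e^{-u-\frac{\alpha}{2}\nr{\cdot}^2}\dd x\geq 0$, imported from Lemma~5 of \cite{CORDEROERAUSQUIN20153834}; this is where essential continuity enters. The $T$-part is integrated by parts on balls $B(0,R)$, the boundary term vanishing as $R\to\infty$ because $T$ is bounded (compact support of $\rho$) and $\bar\rho$ has Gaussian decay; one also uses that the singular part of $\Delta v$ is nonnegative to pass from $\Delta^{\ac}v$ to $\Delta v$. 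With your claimed entropy inequality replaced by this two-step argument, your proof matches the paper's.
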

%\begin{corollary}\label{cor:suff-centered}
%Let $u : \Rsp^d \to \Rsp \cup \{+ \infty \}$ essentially continuous convex function, consider $\bar \rho=  e^{-u - \frac{\alpha}{2} \nr{\cdot}^2}$ and suppose $(\nabla u)_\# \bar \rho = \mu$, then $\bar \rho\in  \Prob_1(\Rsp^d)$  and it solves \eqref{eq:reg-moment-meausre_wasserstein} %with datum $\mu$.
%\end{corollary}
From the previous proposition we can deduce that  given $\mu\in\Prob_1(\Rsp^d)$, one can find the solution of problem  \eqref{eq:alpha-mm} by finding the solution of \eqref{eq:reg-moment-meausre_wasserstein} and vice versa. 
\begin{proof}[Proof of Proposition \ref{prop:suff-centered}]
We first assume that $b(\mu)=0$. 
We also assume that $\rho$ is in $\Prob_2(\Rsp^d)$ with compact support.  We consider the unique $W_2$-geodesic between $\bar \rho$ and $\rho$: $t\mapsto \rho_t= ((1-t)Id+t T)_\# \bar \rho$ where $T=\nabla v$ is the Brenier map between $\bar \rho $ and $\rho$. 
Since  $\rho \mapsto \Ent(\rho)$ and $\rho \mapsto \T(\rho, \mu)$ and $\rho \mapsto M_2(\rho)$ are geodesically convex in $\Prob_2(\Rsp^d)$, inequality 
\begin{equation*}
\Ent(\rho)+\T(\rho, \mu)+\frac{\alpha}{2}M_2(\rho)\geq \Ent(\bar\rho)+\T(\bar \rho, \mu)+\frac{\alpha}{2} M_2(\bar \rho), 
\end{equation*}
follows by showing that the derivative in $t$ at $t=0$ of 
\begin{equation*}
\Ent(\rho_t)+\T(\rho_t, \mu)+\frac{\alpha}{2}M_2(\rho_t), 
\end{equation*}
is non negative. 
We observe that 
\begin{equation*}
M_2(\rho_t)= \int \abs{x}^{2}d \rho_t= \int \abs{x}^{2}\,d ((1-t)Id+t T)_\# \bar \rho= \int \abs{(1-t)x+t T(x)}^{2}\, d \bar \rho 
\end{equation*}
that differentiating and evaluating at zero gives 
\begin{equation*}
\frac{d}{dt}M_2(\rho_t)|_{t=0}= - 2 M_2(\bar \rho)+ 2 \int x \cdot T(x)\, d \bar \rho. 
\end{equation*}
Now we exploit that 
\begin{equation*}
\frac{d}{dt}\left(\Ent(\rho_t)+\T(\rho_t, \mu)\right)|_{t=0}\geq -\int (\Delta^{ac}v-d)\, d \bar \rho + \int (T-x)\cdot \nabla u \, d \bar \rho, 
\end{equation*}
which can be obtained by combining the results in Proposition 2.1 and Proposition 3.3 of \cite{SANTAMBROGIO2016418} where the derivatives are computed, together with the fact (which follows from Lemma 5 in \cite{CORDEROERAUSQUIN20153834}) that %(\textcolor{red}{here use essentially continuos})
\begin{equation*}
\int d e^{-u - \frac{\alpha}{2} \nr{\cdot}^2}\, dx - \int x\cdot (\nabla u+ \alpha x) e^{-u - \frac{\alpha}{2} \nr{\cdot}^2}\, d x \geq 0, 
\end{equation*}
to get that 
\begin{align*}
&\frac{d}{dt}\left(\Ent(\rho_t)+\T(\rho_t, \mu)+\frac{\alpha}{2}M_2(\rho_t)\right)|_{t=0}\\
&\geq -\int (\Delta^{ac}v-d)\, d \bar \rho + \int (T-x)\cdot \nabla u \, d \bar \rho- \alpha M_2(\bar \rho)+ \alpha \int x \cdot T(x)\, d \bar \rho\\
&\geq -\int \Delta^{ac}v\, d \bar \rho + \int T\cdot \nabla u \, d \bar \rho +\alpha \int x \cdot T(x)\, d \bar \rho. 
\end{align*}
Now we observe that 
\begin{equation*}
 -\int_{B(0,R)} \Delta v\, d \bar \rho= -\int_{B(0,R)}T\cdot (\nabla u+ \alpha x)\, d \bar \rho -\int_{\partial B(0,R)}T(x)\cdot n \bar \rho \, d \mathcal{H}^{n-1}
\end{equation*}
which, by letting $R\to +\infty $ gives 
\begin{equation*}
-\int \Delta v\, d \bar \rho= -\int T\cdot (\nabla u+ \alpha x)\, d \bar \rho 
\end{equation*}
and so 
\begin{align*}
&\frac{d}{dt}\left(\Ent(\rho_t)+\T(\rho_t, \mu)+\frac{\alpha}{2}M_2(\rho_t)\right)|_{t=0}\\
&\geq  -\int T\cdot (\nabla u+ \alpha x)\, d \bar \rho +  \int T\cdot \nabla u \, d \bar \rho +\alpha \int x \cdot T(x)\, d \bar \rho\geq 0. 
\end{align*}
The fact that $\bar \rho$ is a minimizer also in the whole class of measures in $\rho\in\Prob_1(\Rsp^d)$ follows from Lemma \ref{lemma:approx}. Therefore the case with $b(\mu)=0$ is proved. Now we remove this hypothesis. Define 
\begin{align*}
\tilde \mu \coloneqq (\tau_{-b(\mu)})_{\sharp}\mu, \quad \quad \tilde \rho\coloneqq (\tau_{\frac{1}{\alpha}b(\mu)})_{\sharp}\rho,
\end{align*}
then $b(\tilde \mu)= 0$ and $\tilde \rho = e^{-\tilde u-\frac{\alpha}{2}\|x\|^2}$, with $\tilde u $ convex (see \eqref{eq:u-trasl}) and $\nabla\tilde u_{\sharp}\tilde \rho= \tilde \mu$ indeed by observing that $\nabla \tilde u= \nabla u \circ\tau_{-\frac{1}{\alpha}b(\mu)}-b(\mu) $, one has 
\begin{align*}
&(\nabla \tilde u)_{\sharp}\tilde \rho=\left(\nabla u \circ\tau_{-\frac{1}{\alpha}b(\mu)}-b(\mu)\right)_{\sharp}\left(\tau_{\frac{1}{\alpha}b(\mu)_{\sharp}}\rho\mathcal{L}^d\right)  \\
&=\left(\nabla u -b(\mu)\right)_{\sharp}\rho\mathcal{L}^d= (\tau_{-b(\mu)})_{\sharp}\mu=\tilde \mu.
\end{align*}
From Proposition \ref{prop:suff-centered}, we know that $\tilde \rho$ is minimizer of $F_{\tilde \mu}$. Now as shown in the proof of Proposition \ref{prop:existence}, if $\tilde \rho$ is a minimizer of $F_{\tilde \mu}$, then $\bar \rho = (\tau_{-\frac{1}{\alpha}b(\mu)})_{\sharp}\tilde{ \rho}$ is a minimizer for $F_{\mu}$ with  $\mu=(\tau_{b(\mu)})_{\sharp}\tilde\mu$.
\end{proof}

\subsection*{Aknowledgements}The authors wish to thank Quentin Merigot and Filippo Santambrogio for the suggestion of the problem and discussions on the subject. The project was mainly developed while the authors were at the Lagrange Mathematics and Computing Research Center, which they wish to thank. 
The second author thanks Simone Di Marino for discussions on the topic.
S.F. benefits from the support of MUR (PRIN project 202244A7YL) and thanks DipE awarded to the DIMA-Unige (CUP
D33C23001110001).  S.F. is a member of INdAM–GNAMPA.\\

\subsection*{Statements and Declarations} The authors have no relevant financial or non-financial interests to disclose.

\bibliographystyle{plain}
\bibliography{ref}

\end{document}